\documentclass[12pt]{article}
\usepackage{amsmath}
\usepackage{amsmath,amssymb, amsthm, latexsym, amsfonts, epsfig, color,graphicx,}
\usepackage{mathrsfs}
\usepackage{indentfirst}
\usepackage{chngcntr}
\usepackage{bbm}
\usepackage{dsfont}
\usepackage{newtxmath} 
\allowdisplaybreaks
\usepackage[colorlinks=True,linkcolor=blue,anchorcolor=blue,citecolor=blue,CJKbookmarks=true]{hyperref}
\usepackage{geometry}
\begin{document}
	\renewcommand{\a}{\alpha}
	\newcommand{\D}{\Delta}
	\newcommand{\ddt}{\frac{d}{dt}}
	\counterwithin{equation}{section}
	\newcommand{\e}{\epsilon}
	\newcommand{\eps}{\varepsilon}
	\newtheorem{theorem}{Theorem}[section]
	\newtheorem{proposition}{Proposition}[section]
	\newtheorem{lemma}[theorem]{Lemma}
	\newtheorem{remark}[theorem]{Remark}
	\newtheorem{example}{Example}[section]
	\newtheorem{definition}{Definition}[section]
	\newtheorem{corollary}[theorem]{Corollary}
	\makeatletter
	\newcommand{\rmnum}[1]{\romannumeral #1}
	\newcommand{\Rmnum}[1]{\expandafter\@slowromancap\romannumeral #1@}
	\makeatother

		\title{\bf Unique continuation inequalities for the Dunkl–Schr\"odinger equation via uncertainty principles}
		\author{Xingyu Zhao, Hui Xu\footnote{Corresponding author.}, Zhiwen Duan \\
			{\small {\it  School of Mathematics and Statistics, Huazhong University of Science }}\\
			{\small {\it and Technology, Wuhan, {\rm 430074,} P.R.China}}  \\
			{\small {\it Email: zhaoxingyu@hust.edu.cn~(X.Zhao)}}\\
			{\small {\it Email: xh202180016@outlook.com~(H.Xu)}}\\
			{\small {\it Email: duanzhw@hust.edu.cn~(Z.Duan)}}
		}
		\date{}
		\maketitle
		\centerline{\large\bf Abstract }
	In this paper, we establish unique continuation inequalities at two time points for the Dunkl--Schr\"odinger equation. The proof is based on quantitative uncertainty principles for the Dunkl transform. In particular, we prove that pairs of $(\varepsilon,k)$-thin sets form strong annihilating pairs for the Dunkl transform, which yields quantitative unique continuation properties for solutions to the Dunkl--Schr\"odinger equation.\\
\noindent{\bf Key words:}
	 Dunkl–Schr\"odinger equation, unique continuation, uncertainty principle,  strong annihilating pairs
\par 
\noindent {\bf AMS Subject Classifications 2020:} 35C80; 42B10; 93B07; 35B60

\section{Introduction}
The purpose of this paper is to establish quantitative unique continuation inequalities for the Dunkl--Schr\"odinger equation
\begin{equation}\label{DQ}
	\begin{cases}
		i\partial_tu(x,t)+\Delta_ku(x,t)=0,
		& (x,t)\in\mathbb{R}^d\times(0,\infty),\\
		u(x,0)=u_0(x),
		& u_0\in L_k^2(\mathbb{R}^d),
	\end{cases}
\end{equation}
where $L_k^2(\mathbb{R}^d)$ is the weighted space associated with the measure
$ d\mu_k(x)=h_k^2(x)\,dx $, and $h_k$ is the Dunkl weight. Here
\[
\Delta_k=\sum_{j=1}^d \mathcal{T}_j^2
\]
is the Dunkl Laplacian, with $\{\mathcal{T}_j\}_{j=1}^d$ denoting the Dunkl operators associated with a finite reflection group $G$ and a nonnegative multiplicity function $k$. Dunkl operators, introduced by Dunkl in \cite{CFD1}, are differential--difference operators naturally associated with finite reflection groups. They play an important role in both pure mathematics and mathematical physics, and provide a useful framework for the study of special functions related to root systems \cite{CFD2}. Moreover, Dunkl operators are closely connected with certain Schr\"odinger operators arising in Calogero--Sutherland type quantum many-body systems \cite{BF,H96,VV}. They also give rise to generalizations of several classical analytic structures, including the Laplace operator, Fourier transform, heat semigroup, wave equation, and Schr\"odinger equation \cite{MFE2,CFD3,HM1,HM,HM2,HM3,HM4,MR}. In particular, when $k=0$, the Dunkl--Schr\"odinger equation reduces to the classical free Schr\"odinger equation.

The operator $-\Delta_k$ is densely defined, symmetric, and positive on $L_k^2(\mathbb{R}^d)$. Therefore, by the Friedrichs extension theorem, it admits a positive self-adjoint extension, and the corresponding Cauchy problem is well posed (see \cite{AB}). Moreover, it is known that the solution of \eqref{DQ} admits the representation \cite{HM}
\begin{equation}\label{key}
	u(x,t)
	=
	\frac{1}{(2t)^{\gamma_k+\frac d2}}
	e^{-i(d+2\gamma_k)\frac{\pi}{4}\operatorname{sgn}t}
	e^{i|x|^2/4t}
	\left[
	D_k\!\left(e^{i|\cdot|^2/4t}u_0\right)
	\right]\!\left(\frac{x}{2t}\right),
\end{equation}
where $D_k$ denotes the Dunkl transform and
\(\gamma_k=\sum_{\alpha\in R_+}k_\alpha
\) (see Section \ref{sec2} for the precise definitions).
This explicit formula shows that the behavior of solutions to \eqref{DQ} is closely linked to the Dunkl transform, and therefore quantitative questions for \eqref{DQ} are naturally related to uncertainty principles for the Dunkl transform.

The study of observability and quantitative unique continuation properties for Schr\"odinger equations has attracted considerable attention in recent years. Uncertainty principles have proved to be an effective tool in this direction. For the free Schr\"odinger equation, the Logvinenko--Sereda theorem \cite{Kov} and Nazarov's uncertainty principle \cite{Jaming} were used in \cite{HWW,WWZ} to establish observability inequalities from measurable sets and at two different time points. Similar unique continuation inequalities for linear Schr\"odinger equations with decaying potentials were obtained in \cite{HS},  while corresponding inequalities for nonlinear Schr\"odinger equations were established in \cite{WLH}.
More recently, these ideas have been extended to other transform settings. In \cite{WDX,XWD}, uncertainty principles associated with the Hankel transform were applied to Schr\"odinger equations with inverse-square potentials, while \cite{Xu} studied observability properties for Bessel-type Schr\"odinger equations via the canonical Fourier--Bessel transform. In the Dunkl setting, quantitative unique continuation inequalities for Dunkl--Schr\"odinger and Dunkl--Hermite equations were established in \cite{Muk}, where the corresponding estimates were obtained for sets of finite measure.

Motivated by these works, we establish a quantitative uncertainty principle for the Dunkl transform in terms of thin sets. Following the notion of thin sets  introduced in the Fourier transform setting in \cite{SVW}, we introduce the following definition in the Dunkl setting.
\begin{definition}
	Let $0<\varepsilon<1$ and define
	\[
	\rho(x)=\min(1,|x|^{-1}),
	\qquad x\in\mathbb{R}^d.
	\]
	A measurable set $S\subset\mathbb{R}^d$ is said to be $(\varepsilon,k)$-thin if
	\[
	\mu_k\bigl(S\cap B(x,\rho(x))\bigr)
	\le
	\varepsilon\,\mu_k\bigl(B(x,\rho(x))\bigr),
	\qquad x\in\mathbb{R}^d.
	\]
\end{definition}
The main objective of this paper is to show that pairs of $(\varepsilon,k)$--thin sets form strong annihilating pairs for the Dunkl transform. More precisely, we establish the following quantitative uncertainty inequality:
\begin{equation}\label{UP}
	\|f\|_{L_k^2}^2
	\le
	C\Bigl(
	\|f\|_{L_k^2(S^c)}^2
	+
	\|D_k f\|_{L_k^2(\Sigma^c)}^2
	\Bigr),
\end{equation}
where $S$ and $\Sigma$ are $(\varepsilon,k)$--thin sets. A key difficulty in the Dunkl setting, in contrast to the classical Fourier or Hankel transform cases, is that the generalized translation operator does not admit an explicit kernel representation. As a consequence, standard pointwise decay estimates are not directly available.
To address this issue, we build on Theorem 4.1 in \cite{AJ} and extend its framework by removing the smoothness assumptions on the functions. In particular, we obtain decay estimates for the generalized Dunkl translation of dilated compactly supported functions (see Lemma \ref{cutoff} and Remark \ref{re1}).
This refined estimate plays a crucial role in the proof of the strong annihilating pair property for the Dunkl transform.


Applying this uncertainty principle, we then derive the following unique continuation inequality at two time points for the Dunkl--Schr\"odinger equation \eqref{DQ}, which is the main result of this paper. Here and in the sequel, $u(x,t;u_0)$ denotes the solution to \eqref{DQ} corresponding to the initial datum $u_0$.
\begin{theorem}\label{T1}
	Assume that $\varepsilon$ is sufficiently small. Let $A,B\subset\mathbb{R}^d$ be $(\varepsilon,k)$-thin sets and let $T>S\ge0$. Then there exists a constant $C=C(k,d,A,B)>0$ such that, for every $u_0\in L_k^2(\mathbb{R}^d)$,
	\begin{equation}\label{T1.1}
		\int_{\mathbb{R}^d}|u_0(x)|^2\,d\mu_k(x)
		\le
		C\left(
		\int_{A^c}|u(x,S;u_0)|^2\,d\mu_k(x)
		+
		\int_{(2(T-S)B)^c}|u(x,T;u_0)|^2\,d\mu_k(x)
		\right).
	\end{equation}
(Here and in what follows, \(C(\cdot)\) stands for a positive constant depending only on the quantities enclosed in the brackets, which may vary in different contexts.)
\end{theorem}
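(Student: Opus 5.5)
The plan is to derive \eqref{T1.1} from the strong annihilating pair inequality \eqref{UP}, which we take for granted for $(\varepsilon,k)$-thin sets $S,\Sigma$ with $\varepsilon$ small. Two facts about \eqref{DQ} will be used: the flow $u_0\mapsto u(\cdot,t;u_0)$ is unitary on $L_k^2$ (it is $e^{it\Delta_k}$ with $-\Delta_k$ self-adjoint), and it satisfies the group law $u(\cdot,T;u_0)=u(\cdot,T-S;u(\cdot,S;u_0))$. Set $\tau=T-S>0$ and $v_0=u(\cdot,S;u_0)$. Then $\|v_0\|_{L_k^2}=\|u_0\|_{L_k^2}$, and $u(\cdot,T;u_0)=u(\cdot,\tau;v_0)$. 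It therefore suffices to prove
\[
\|v_0\|_{L_k^2}^2\le C\Bigl(\|v_0\|_{L_k^2(A^c)}^2+\|u(\cdot,\tau;v_0)\|_{L_k^2((2\tau B)^c)}^2\Bigr).
\]

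Next we use \eqref{key}. Put $f=e^{i|\cdot|^2/4\tau}v_0$, so that $|f|=|v_0|$ pointwise and $\|f\|_{L_k^2(A^c)}=\|v_0\|_{L_k^2(A^c)}$. By \eqref{key},
\[
|u(x,\tau;v_0)|=(2\tau)^{-\gamma_k-d/2}\,|D_kf(x/(2\tau))|.
\]
Since $h_k$ is homogeneous of degree $\gamma_k$, we have $d\mu_k(2\tau\xi)=(2\tau)^{2\gamma_k+d}\,d\mu_k(\xi)$. Changing variables $x=2\tau\xi$ turns $x\in(2\tau B)^c$ into $\xi\in B^c$, and the powers of $2\tau$ cancel exactly:
\[
\int_{(2\tau B)^c}|u(x,\tau;v_0)|^2\,d\mu_k(x)=\int_{B^c}|D_kf(\xi)|^2\,d\mu_k(\xi).
\]
(The same computation with Plancherel for $D_k$ recovers the unitarity of the flow, which is a useful consistency check.)

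We now apply \eqref{UP} with $S=A$ and $\Sigma=B$, both $(\varepsilon,k)$-thin:
\[
\|f\|_{L_k^2}^2\le C\bigl(\|f\|_{L_k^2(A^c)}^2+\|D_kf\|_{L_k^2(B^c)}^2\bigr).
\]
Substituting the identities above gives the reduced inequality, and hence \eqref{T1.1}. The constant $C$ is the one from \eqref{UP}, so it does not depend on $S$ or $T$. This is consistent with the statement, because the dilation by $2(T-S)$ is built into the set $2(T-S)B$ rather than into the thinness condition.

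The only delicate points are checking that the phase factors in \eqref{key} have modulus one and that the Jacobian exactly cancels the prefactor $(2\tau)^{-(\gamma_k+d/2)}$ after squaring; both are routine. The real obstacle is \eqref{UP} itself, which I am assuming is established earlier in the paper as the strong annihilating pair result for thin sets, built on the translation decay estimate of Lemma \ref{cutoff}.
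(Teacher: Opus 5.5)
Your proposal is correct and follows the paper's argument. The paper first uses Theorem~\ref{pair} to get the uncertainty inequality for $(A^c,B^c)$. It then turns this into a two-time inequality at times $0$ and $T-S$ via Lemma~\ref{L5}, which is exactly your change-of-variables computation with \eqref{key}. Finally it shifts time by $S$ using the group property and $L^2_k$-conservation; you simply inline the one direction of Lemma~\ref{L5} that is needed instead of citing it.
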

The proof of Theorem \ref{T1} combines the uncertainty principle \eqref{UP} with the explicit representation \eqref{key} and a standard argument linking observability at two time points to the uncertainty principle.

Several remarks on Theorem \ref{T1} are in order:\\
($\mathbf{a_1}$) Inequality \eqref{T1.1} can be understood from two different perspectives. 
From the unique continuation perspective, \eqref{T1.1} is a unique continuation inequality at two time points for the Dunkl--Schr\"odinger equation \eqref{DQ}. In particular, if
\[
u(x,S;u_0)=0 \quad \text{on } A^c,
\qquad
u(x,T;u_0)=0 \quad \text{on } \bigl(2(T-S)B\bigr)^c,
\]
then it follows that
\[
u(x,t;u_0)= 0
\quad \text{on } \mathbb{R}^d\times [0,\infty).
\]
From the observability perspective, \eqref{T1.1} is an observability inequality at two time points. It states that, when $A$ and $B$ are $(\varepsilon,k)$-thin sets, the initial datum can be recovered from observations of the solution outside $A$ at time $S$ and outside the dilated set $2(T-S)B$ at time $T$. \\
($\mathbf{a_2}$) When $k=0$, inequality \eqref{T1.1} reduces to a two-time unique continuation inequality for the free Schr\"odinger equation.  \\
\textbf{Plan of the paper.} The rest of the paper is organized as follows: In Section \ref{sec2}, we introduce the notation and recall the necessary background on Dunkl operators, the Dunkl transform, generalized translations, and Dunkl convolutions. In Section \ref{sec3}, we establish the strong annihilating pair property for the Dunkl transform. Finally, in Section \ref{sec4}, we apply this result to derive a unique continuation inequality for the Dunkl--Schr\"odinger equation.
\section{Preliminaries}\label{sec2}
In this section, we introduce the notation used throughout the paper and review several standard facts concerning Dunkl operators, the Dunkl transform, generalized translation, and Dunkl convolution. For more details on Dunkl theory, we refer the reader to \cite{MFE2,MR3,ST}.
\subsection{Notation}
\begin{itemize}
	\item $\langle \cdot, \cdot \rangle$ denotes the standard Euclidean scalar product in $\mathbb{R}^d$. For $x \in \mathbb{R}^d$, we write $|x| = \sqrt{\langle x, x \rangle}$.
    \item $B(x,r)$ denotes the closed ball in $\mathbb{R}^d$ centered at $x$ with radius $r$.
	\item The weighted measure is $d\mu_k(x) := h_k^2(x) dx$, where $h_k$ is defined below.
\item For a measurable set $A \subset \mathbb{R}^d$, $\chi_A$ denotes the characteristic function of $A$, $A^c$ denotes the complement of $A$, and
\[
\mu_k(A):=\int_A d\mu_k(x).
\]
For each $\lambda>0$, we define
\[
\lambda A:=\{\lambda x:x\in A\}.
\]
	\item $L^p_k(\mathbb{R}^d)$ ($1 \le p \le \infty$) is the space of measurable functions on $\mathbb{R}^d$ with
	\[
	\|f\|_{L^p_k} := \left( \int_{\mathbb{R}^d} |f(x)|^p d\mu_k(x) \right)^{1/p} < \infty \quad (1 \le p < \infty),
	\]
	and $\|f\|_{L^\infty_k} := \operatorname{ess\,sup}_{x\in\mathbb{R}^d} |f(x)|$.
	\item $C_0(\mathbb{R}^d)$ denotes the space of continuous functions vanishing at infinity.
	\item For $1\le p\le \infty$, $L_{\mathrm{rad}}^p(\mathbb{R}^d,d\mu_k)$ denotes the subspace of radial functions in $L_k^p(\mathbb{R}^d)$.
	\item $O(N;\mathbb{R})$ denotes the real orthogonal group of $N \times N$ matrices satisfying $A^{\mathsf{T}}A = I_N$.
\end{itemize}

\subsection{Dunkl theory}
Let $G$ be a finite real reflection group on $\mathbb{R}^d$ with a fixed positive root system $R_+$, normalized such that $\langle \alpha, \alpha \rangle = 2$ for all $\alpha \in R_+$. For a nonzero $\alpha \in \mathbb{R}^d$, let $\sigma_\alpha$ be the reflection in the hyperplane orthogonal to $\alpha$:
\[
x\sigma_\alpha := x - 2\frac{\langle x,\alpha \rangle}{|\alpha|^2}\alpha, \qquad x\in\mathbb{R}^d.
\]
Let $k: R_+ \to [0,\infty)$ be a $G$-invariant multiplicity function, i.e. $k_\alpha = k_\beta$ whenever $\sigma_\alpha$ and $\sigma_\beta$ are conjugate in $G$. Define $\gamma_k := \sum_{\alpha\in R_+} k_\alpha$. 
The associated weight function is
\[
h_k(x) := \prod_{\alpha\in R_+} |\langle x,\alpha\rangle|^{k_\alpha}, \qquad x\in\mathbb{R}^d.
\]
It is known that there exists a constant $C>0$ such that for all $x\in\mathbb{R}^d$ and $r>0$,
\begin{equation}\label{vs}
C^{-1}\mu_k(B(x,r))
\le
 r^d \prod_{\alpha\in R_+}\bigl(|\langle x,\alpha\rangle|+r\bigr)^{2k_\alpha} \leq C\mu_k(B(x,r)).
\end{equation}

The Dunkl operators are defined as
\[
\mathcal{T}_i f(x) = \partial_i f(x) +  \sum_{\alpha \in R_+} k_{\alpha} \frac{f(x) - f(x\sigma_{\alpha})}{\langle x, \alpha \rangle} \langle \alpha, \epsilon_i \rangle, \quad 1 \leq i \leq d,
\]
where 
\(\epsilon_1, \dots, \epsilon_d\) are the standard unit vectors of \(\mathbb{R}^d\). In the case $k=0$, the operators $\mathcal{T}_i$ reduce to the usual partial derivatives.
For any $y\in\mathbb{R}^d$, the system
\[
\begin{cases}
	\mathcal{T}_i u(x,y) = y_i u(x,y), \quad 1\le i\le d,\\
	u(0,y)=1,
\end{cases}
\]
admits a unique analytic solution on $\mathbb{R}^d$, denoted by $E(x,y)$ and called the \emph{Dunkl kernel}. Moreover, $E(x,y)$ extends holomorphically to $\mathbb{C}^d\times\mathbb{C}^d$. We recall the following basic properties of the Dunkl kernel (see \cite{MR3,ST}).
\begin{lemma}\label{EP4}
	For $x,y\in\mathbb{R}^d$:
	\begin{enumerate}
		\item $E(x,y)=E(y,x)$;
		\item $E(tx,y)=E(x,ty)$ for all $t\in\mathbb{C}$.
	\end{enumerate}
\end{lemma}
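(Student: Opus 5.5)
The plan is to check the two identities directly from the characterization of $E(\cdot,y)$ as the unique analytic solution of the eigenvalue system $\mathcal{T}_i u(\cdot,y)=y_iu(\cdot,y)$ with $u(0,y)=1$. Part 2 follows from uniqueness and a scaling argument. Part 1 is the deep one, and I would obtain it from the intertwining/series representation of the kernel.

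\emph{Part 2 (first for real $t\neq0$).} Set $v(x):=E(tx,y)$. The Dunkl operators are homogeneous of degree $-1$ under dilations. For the derivative term, $\partial_i[f(t\cdot)](x)=t(\partial_if)(tx)$. For the difference term, $(tx)\sigma_\alpha=t(x\sigma_\alpha)$ and $\langle x,\alpha\rangle=t^{-1}\langle tx,\alpha\rangle$. Together these give $\mathcal{T}_i[f(t\cdot)](x)=t(\mathcal{T}_if)(tx)$. Hence $\mathcal{T}_iv(x)=t\,y_iE(tx,y)=(ty)_i\,v(x)$ and $v(0)=1$. By uniqueness, $v=E(\cdot,ty)$. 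For $t=0$ both sides equal $1$, since $E(x,0)\equiv1$ by uniqueness applied to the constant function. For complex $t$, both sides are entire in $t$ (using the holomorphic extension) and agree on $\mathbb{R}$, so they agree on $\mathbb{C}$ by the identity theorem.

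\emph{Part 1.} I would use Dunkl's intertwining operator $V_k$: it preserves each space $\mathcal{P}_n$ of homogeneous polynomials of degree $n$, satisfies $\mathcal{T}_iV_k=V_k\partial_i$, and has $V_k1=1$. From these properties one gets
\[
E(x,y)=\sum_{n\ge0}\frac{1}{n!}V_k\bigl(\langle\cdot,y\rangle^n\bigr)(x),
\]
with normal convergence on compacta, because $V_k$ is bounded by $1$ in sup-norm on balls. Write $K_n(x,y):=V_k(\langle\cdot,y\rangle^n)(x)/n!$. Then symmetry $E(x,y)=E(y,x)$ reduces to symmetry of each $K_n$.

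The main obstacle is proving $K_n(x,y)=K_n(y,x)$. I would use Dunkl's pairing on polynomials, $[p,q]_k:=\bigl(p(\mathcal{T})q\bigr)(0)$, which is symmetric; this rests on the commutativity of the $\mathcal{T}_i$ and an inductive argument on degree. Since $\mathcal{T}_iK_n(\cdot,y)=y_iK_{n-1}(\cdot,y)$, the reproducing property $[p,K_n(\cdot,y)]_k=p(y)$ holds for $p\in\mathcal{P}_n$. Applying it with $p=K_n(\cdot,x)$ and then using the symmetry of the pairing gives
\[
K_n(y,x)=[K_n(\cdot,x),K_n(\cdot,y)]_k=[K_n(\cdot,y),K_n(\cdot,x)]_k=K_n(x,y).
\]
If proving the pairing's symmetry from scratch is too costly, I would simply cite \cite{MR3,ST} for it. Since the lemma is stated there as a recalled standard result, citing these references is in fact the intended justification.
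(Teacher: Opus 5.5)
Your proposal is correct. The paper gives no proof of this lemma and only recalls it from \cite{MR3,ST}, and your argument is the standard proof found in that literature: for part 2, the degree $-1$ dilation homogeneity of the $\mathcal{T}_i$ plus uniqueness, then analytic continuation in $t$; for part 1, the expansion $E=\sum_n K_n$ via the intertwiner $V_k$ together with the reproducing property of the symmetric Dunkl pairing $[\cdot,\cdot]_k$. You pass quickly over the termwise application of $\mathcal{T}_i$ to the series, which needs convergence of derivatives (e.g.\ via the holomorphic extension) and not just sup-norm convergence on real balls, and over the symmetry of $[\cdot,\cdot]_k$; both are supplied by those references.
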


\subsection{Dunkl transform}
The Dunkl transform of a function $f\in L^1_k(\mathbb{R}^d)$ is defined by
\[
D_k(f)(y) := c_h \int_{\mathbb{R}^d} f(x) E(x,-iy) \, d\mu_k(x),
\]
where $c_h^{-1} = \int_{\mathbb{R}^d} h_k^2(x) e^{-|x|^2/2}dx$.
In the case $k=0$, $D_k$ reduces to the classical Fourier transform. From \cite{MFE2,CFD3}, the Dunkl transform has the following known properties:
\begin{lemma}\label{Dunkl-transform-properties}
	\begin{enumerate}
		\item If $f\in L^1_k(\mathbb{R}^d)$, then $D_k(f)\in C_0(\mathbb{R}^d)$.
		
		\item (Inversion formula) If both $f$ and $D_k(f)$ belong to $L^1_k(\mathbb{R}^d)$, then
		\[
		f(x)=c_h\int_{\mathbb{R}^d}E(ix,y)D_k(f)(y)\,d\mu_k(y).
		\]
		
		\item The Dunkl transform extends uniquely to an isometric isomorphism on $L^2_k(\mathbb{R}^d)$.
		
	
		\item (Plancherel's formula) For $f\in L^2_k(\mathbb{R}^d)$,
		\[
		\|f\|_{L^2_k}
		=
		\|D_k(f)\|_{L^2_k}.
		\]
	\end{enumerate}
\end{lemma}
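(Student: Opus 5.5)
These are the standard facts of Dunkl harmonic analysis from \cite{MFE2,CFD3}. My plan is to rebuild them from a few structural properties of the Dunkl kernel, following de Jeu and R\"osler.

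\textbf{Kernel bounds and part 1.} The starting input is the estimate $|E(x,-iy)|\le 1$ for $x,y\in\mathbb{R}^d$, which I take from \cite{MR3,ST}. It gives $\|D_k f\|_{\infty}\le c_h\|f\|_{L^1_k}$. Continuity of $D_k f$ then follows from the continuity of $E$ and dominated convergence.

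For vanishing at infinity I will use a density argument. First I show that $D_k$ maps the Schwartz space $\mathcal{S}(\mathbb{R}^d)$ into itself. The tool is the intertwining relations
\[
D_k(\mathcal{T}_j f)(y)=iy_jD_k f(y),\qquad D_k(x_jf)=i\mathcal{T}_jD_k f,
\]
obtained by integrating by parts, which is legitimate because $\mathcal{T}_j$ is antisymmetric with respect to $d\mu_k$. These relations give polynomial decay of $D_k f$ for $f\in\mathcal{S}$. Since $\mathcal{S}$ is dense in $L^1_k$ and $D_k$ is bounded from $L^1_k$ to $L^\infty$, uniform limits place $D_k(L^1_k)$ inside $C_0(\mathbb{R}^d)$.

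\textbf{The Gaussian identity.} The key computation is
\[
D_k\bigl(e^{-s|\cdot|^2}\bigr)(y)=(2s)^{-\gamma_k-\frac d2}e^{-|y|^2/4s},\qquad s>0.
\]
I would derive it from the Macdonald-type formula
\[
c_h\int E(x,z)E(x,w)e^{-|x|^2/2}d\mu_k(x)=e^{(\langle z,z\rangle+\langle w,w\rangle)/2}E(z,w).
\]
The plan for that formula has four steps:
\begin{enumerate}
\item Set $\Delta_k=\sum_j \mathcal{T}_j^2$ and use the Dunkl pairing $[p,q]_k=(p(\mathcal{T})q)(0)$.
\item Establish the identity $[p,q]_k=c_h\int e^{-\Delta_k/2}p\; e^{-\Delta_k/2}q\;e^{-|x|^2/2}d\mu_k$ for polynomials $p,q$.
\item Use the generalized Hermite polynomials, which are orthogonal for $e^{-|x|^2/2}d\mu_k$.
\item Sum the series expansion of $E$, and extend holomorphically in $z,w$ using Lemma \ref{EP4}.
\end{enumerate}
I expect this step to be the main obstacle. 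It requires controlling the expansion of $E$ in homogeneous polynomials and interchanging sums and integrals, which needs the growth bound $|E(x,z)|\le e^{|x||\mathrm{Re} z|}$.

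\textbf{Parts 2--4.} For the inversion formula I use the Gaussians as an approximate identity. From the identity above and Fubini (justified by the kernel bounds),
\[
c_h\int E(ix,y)D_kf(y)e^{-\delta|y|^2}d\mu_k(y)=\int f(z)\,\Gamma_\delta(x,z)\,d\mu_k(z),
\]
where $\Gamma_\delta\ge0$ is the Dunkl heat kernel, of total mass $1$. As $\delta\to0$ it concentrates at $z=x$ for continuous $f$, and it gives $L^1_k$ convergence in general. If $D_kf\in L^1_k$, dominated convergence on the left-hand side yields the inversion formula almost everywhere.

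For parts 3 and 4 I take the generalized Hermite functions $h_\nu(x)=e^{-|x|^2/2}H_\nu(x)$. They form an orthonormal basis of $L^2_k$; completeness comes from density of polynomials times Gaussian. Differentiating the Macdonald formula shows that they are eigenfunctions, $D_k h_\nu=(-i)^{|\nu|}h_\nu$. Hence $D_k$ restricted to $L^1_k\cap L^2_k$ is isometric on a dense set and extends to a unitary operator on $L^2_k$, which gives Plancherel's formula. Uniqueness of the extension follows from density. The inverse of the extension is $f\mapsto D_kf(-\cdot)$, so the extension is surjective and hence an isomorphism.
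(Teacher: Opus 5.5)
The paper does not prove this lemma; it quotes it from de Jeu and Dunkl. Your reconstruction follows the de Jeu--R\"osler route, and part~1, the Macdonald/Gaussian identities and the Hermite-eigenbasis argument for parts~3--4 are sound in outline. In parts~3--4 you still need to check that the unitary operator built on the Hermite span agrees with the integral $D_k$ on all of $L^1_k\cap L^2_k$. This follows from $\int D_kf\,g\,d\mu_k=\int f\,D_kg\,d\mu_k$ for $g$ in the Hermite span, plus the fact that a function in $L^2_k+L^\infty_k$ annihilating every $p(x)e^{-|x|^2/2}$ vanishes a.e.

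\textbf{The gap: concentration of the heat kernel.} The genuine gap is the sentence ``as $\delta\to0$ it concentrates at $z=x$''. In the Dunkl setting this is not a routine approximate-identity fact; it is exactly the non-obvious point. The kernel
$\Gamma_\delta(x,z)=c_h(2\delta)^{-\gamma_k-d/2}e^{-(|x|^2+|z|^2)/4\delta}E\bigl(x/\sqrt{2\delta},z/\sqrt{2\delta}\bigr)$
is not a function of $x-z$, and the kernel bounds you invoke localize it only weakly:
\begin{itemize}
\item $|E(x,y)|\le e^{|x||y|}$ gives $\Gamma_\delta(x,z)\le C\delta^{-\gamma_k-d/2}e^{-(|x|-|z|)^2/4\delta}$, which localizes only near the sphere $|z|=|x|$;
\item R\"osler's integral representation of $E$ localizes only near the orbit $Gx$.
\end{itemize}
Nothing in your argument rules out a fixed fraction of the mass staying near $gx$ with $g\neq e$. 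You also cannot simply import $e^{\delta\Delta_k}f\to f$ from results proved via the Dunkl transform, since that would be circular.

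\textbf{First repair: moments.} Rescaling the Macdonald formula gives
\[
\int_{\mathbb{R}^d}E(z,w)\,\Gamma_\delta(x,z)\,d\mu_k(z)=e^{\delta\langle w,w\rangle}E(x,w),\qquad w\in\mathbb{C}^d.
\]
Comparing homogeneous components in $w$ shows that $f\mapsto\int f(z)\Gamma_\delta(\cdot,z)\,d\mu_k(z)$ acts on polynomials as $e^{\delta\Delta_k}$. Since $\Delta_k z_j=0$ and $\Delta_k|z|^2=2(d+2\gamma_k)$, you get $\int|z-x|^2\,\Gamma_\delta(x,z)\,d\mu_k(z)=2\delta(d+2\gamma_k)$. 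Positivity and unit mass then give concentration at $x$ by Chebyshev, uniformly in $x$. That is exactly what your pointwise and $L^1_k$ approximate-identity arguments need.

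\textbf{Second repair: reorder.} Alternatively, prove Plancherel first from the Hermite eigenbasis. Then obtain the $L^1_k$ inversion by duality: with $\tilde f$ the right-hand side of the inversion formula, check that $\int\tilde f\,g\,d\mu_k=\int f\,g\,d\mu_k$ for every $g$ in the Hermite span, using $D_k\bigl[(D_kg)(-\cdot)\bigr]=g$ there. This avoids the heat kernel entirely.
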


\subsection{Dunkl translation}
Following \cite{ST}, we introduce the generalized translation operator.
\begin{definition}
	For $y\in\mathbb{R}^d$, the generalized translation operator
	$f\mapsto \tau_y f$ is defined on $L^2_k(\mathbb{R}^d)$ by
	\[
	D_k(\tau_y f)(x)
	=
	E(y,-ix)\,D_k(f)(x),
	\qquad x\in\mathbb{R}^d.
	\]
\end{definition}
Let $A_k(\mathbb{R}^d) := \{ f\in L^1_k(\mathbb{R}^d) : D_k(f)\in L^1_k(\mathbb{R}^d) \}$.
Then, for every $f\in A_k(\mathbb{R}^d)$, the generalized translation admits the integral representation
\[
\tau_y f(x)
=
\int_{\mathbb{R}^d}
E(ix,\xi)E(-iy,\xi)D_k(f)(\xi)\,d\mu_k(\xi).
\]

We next recall an integral representation of the Dunkl translation operator (see \cite[Theorem 5.1]{MR3}). To this end, we introduce some auxiliary notation. Let $M$ denote the space of Borel measures on $\mathbb{R}^d$, and define
\[
M_{\mathrm{rad}} := \{ \mu \in M : \mu \circ A = \mu \text{ for all } A \in O(N;\mathbb{R}) \}
\]
the subspace of radial measures in $M$. Moreover, let $M^1(\mathbb{R}^d)$ denote the space of probability measures on $\mathbb{R}^d$.
\begin{lemma}\label{LPYZ}
	For each $x,y\in\mathbb{R}^d$ there exists a unique compactly supported radial probability measure $\rho_{x,y}^k\in M^1_{\mathrm{rad}}(\mathbb{R}^d)$ such that for all $f\in C^\infty(\mathbb{R}^d)$,
	\begin{equation}\label{eq:5.1}
		\tau_y f(x) = \int_{\mathbb{R}^d} f \, d\rho_{x,y}^k.
	\end{equation}
	The support of $\rho_{x,y}^k$ is contained in
	\[
	\bigl\{\xi\in\mathbb{R}^d : \min_{g\in G}|x+gy|\le |\xi|\le \max_{g\in G}|x+gy|\bigr\}.
	\]
\end{lemma}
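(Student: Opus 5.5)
The plan follows Rösler's route: first obtain the formula for Gaussians, then extend it by density. I will prove the statement for radial $f$, that is, $f(\xi)=\tilde f(|\xi|)$. This is the setting of \cite[Theorem 5.1]{MR3}, and it is the only reading under which a radial representing measure can work: for non-radial $f$ the measure would have to depend on more than $|\xi|$. I will therefore treat ``$f\in C^\infty(\mathbb{R}^d)$'' in Lemma \ref{LPYZ} as shorthand for smooth radial functions.

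The basic ingredient I will take from Dunkl theory is Rösler's positive integral representation of the kernel. For each $x\in\mathbb{R}^d$ there is a probability measure $\mu_x$, supported in the convex hull $\operatorname{co}(Gx)$, such that
\[
E(x,z)=\int_{\mathbb{R}^d} e^{\langle \eta,z\rangle}\,d\mu_x(\eta),\qquad z\in\mathbb{C}^d .
\]
\textbf{Step 1 (Gaussians).} For $s>0$ let $g_s(\xi)=e^{-s|\xi|^2}$. Its Dunkl transform is again a Gaussian, and the standard Gaussian integral $\int E(ix,\xi)E(-iy,\xi)e^{-|\xi|^2/4s}\,d\mu_k(\xi)$, evaluated via the Macdonald-type identity, gives
\[
\tau_y g_s(x)=e^{-s(|x|^2+|y|^2)}\,E(2sx,y).
\]
Lemma \ref{EP4} is used here to move the factor $2s$ from one argument of $E$ to the other. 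Inserting the Bochner-type representation of $E(x,\cdot)$ yields
\[
\tau_y g_s(x)=\int e^{-s(|x|^2+|y|^2+2\langle \eta,y\rangle)}\,d\mu_x(\eta)=\int \tilde g_s\Bigl(\sqrt{|x|^2+|y|^2+2\langle\eta,y\rangle}\Bigr)d\mu_x(\eta).
\]

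\textbf{Step 2 (definition of $\rho^k_{x,y}$ and support).} Let $\nu$ be the image of $\mu_x$ under the map $\eta\mapsto r(\eta)=\sqrt{|x|^2+|y|^2+2\langle\eta,y\rangle}$. Let $\rho^k_{x,y}$ be the radial probability measure obtained by spreading $\nu$ uniformly over spheres, $\rho^k_{x,y}=\int \sigma_r\,d\nu(r)$, where $\sigma_r$ is normalized surface measure on $\{|\xi|=r\}$. With this choice, Step 1 reads $\tau_y g_s(x)=\int g_s\,d\rho^k_{x,y}$.

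For the support, the function $\eta\mapsto |x|^2+|y|^2+2\langle\eta,y\rangle$ is affine. Its extreme values on $\operatorname{co}(Gx)$ are therefore attained at the points $gx$, where it equals $|gx+y|^2=|x+g^{-1}y|^2$. This gives exactly the annulus claimed in Lemma \ref{LPYZ}.

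\textbf{Step 3 (extension and uniqueness).} Linear combinations of the functions $\xi\mapsto e^{-s|\xi|^2}$, $s>0$, are dense in the radial part of $C_0(\mathbb{R}^d)$ by Stone--Weierstrass applied in the variable $|\xi|^2$, since they separate points and vanish nowhere.

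Both sides of the identity are continuous in $f$:
\begin{itemize}
\item the right-hand side is controlled in sup norm, because $\rho^k_{x,y}$ is a compactly supported probability measure;
\item for the left-hand side, I would work first with radial Schwartz functions. There I approximate in the norm $\|D_k f\|_{L^1_k}$, which dominates $\tau_y f(x)$ through the $A_k$ integral formula and $|E(ix,\xi)|\le 1$.
\end{itemize}
Smooth radial $f$ are then handled by a compactly supported cutoff equal to $1$ near the support annulus, combined with locality of $\tau_y$ on that annulus.

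Uniqueness follows because a radial measure is determined by its integrals against the dense family of Gaussians.

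\textbf{Main obstacle.} The delicate points are twofold. First, justifying the exact Gaussian translation formula, which requires the Macdonald identity and analytic continuation of $E$ in its second argument. Second, making the density argument compatible with the way $\tau_y$ is defined, since on $L^2_k$ the translation is not a priori bounded in sup norm. I would handle the second point through the $A_k$ integral representation and Schwartz-class approximation, and only afterwards pass to general smooth radial $f$ by using the compact support of $\rho^k_{x,y}$.
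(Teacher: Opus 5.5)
The paper does not prove this lemma; it simply quotes R\"osler's Theorem 5.1 in \cite{MR3}. Your route (Gaussian translates, R\"osler's positive representation $E(x,z)=\int e^{\langle\eta,z\rangle}\,d\mu_x(\eta)$, push-forward to a radial measure, density) is the standard way to obtain that result, and you are right that $f$ must be radial.

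\textbf{Sign error in Step 1, which your support claim depends on.} Your formula $\tau_y g_s(x)=e^{-s(|x|^2+|y|^2)}E(2sx,y)$ is correct for the normalization $D_k(\tau_yf)=E(y,-i\,\cdot)D_kf$. Combined with $E(2sx,y)=E(x,2sy)=\int e^{2s\langle\eta,y\rangle}\,d\mu_x(\eta)$, it gives
\[
\tau_y g_s(x)=\int_{\mathbb{R}^d} e^{-s(|x|^2+|y|^2-2\langle\eta,y\rangle)}\,d\mu_x(\eta),
\]
with $-2\langle\eta,y\rangle$, not $+2\langle\eta,y\rangle$. The correct radius map is therefore $r(\eta)=\sqrt{|x|^2+|y|^2-2\langle\eta,y\rangle}$. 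Its values at the vertices $gx$ are $|gx-y|=|x-g^{-1}y|$, so the corrected argument proves
\[
\operatorname{supp}\rho^k_{x,y}\subset\bigl\{\xi:\ \min_{g\in G}|x-gy|\le|\xi|\le\max_{g\in G}|x-gy|\bigr\}.
\]
This coincides with the printed annulus only when $-\mathrm{id}\in G$, and the difference is real. In the paper's convention $\tau_yf(x)=f(x-y)$ when $k=0$, so $\rho^0_{x,y}$ is normalized surface measure on the sphere of radius $|x-y|$. Take $G=\{1,\sigma\}$ with $\sigma(a,b)=(b,a)$ on $\mathbb{R}^2$ and $x=y=(1,0)$: this radius is $0$, whereas the printed annulus is $\sqrt2\le|\xi|\le2$. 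So your sentence ``this gives exactly the annulus claimed'' is an artifact of the slip. The support clause needs the same kind of correction you already made for radiality: replace $x+gy$ by $x-gy$, or equivalently switch to R\"osler's convention, in which the $k=0$ translation is $f(x+y)$.

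\textbf{Unjustified density claim in Step 3.} Stone--Weierstrass gives density of Gaussians only in sup norm. That controls $\int f\,d\rho^k_{x,y}$ but not $\tau_yf(x)$, and you assert approximation in $\|D_kf\|_{L^1_k}$ without proof. What you need is that the span of $e^{-t|\xi|^2}$, $t>0$ (the transforms of your Gaussians), is dense in the radial part of $L^1_k$. This follows from Hahn--Banach together with uniqueness of the Laplace transform in $u=|\xi|^2$. Sup-norm convergence of the corresponding approximants of $f$ then comes for free from the inversion formula, since $|E(ix,\xi)|\le1$.

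\textbf{Meaning of $\tau_yf(x)$ for general smooth $f$.} The paper's $\tau_y$ acts on $L^2_k$, so it gives no pointwise meaning to $\tau_yf(x)$ for a general smooth radial $f$. You need the extension of $\tau_y$ to a continuous operator on $C^\infty(\mathbb{R}^d)$ (Trim\`eche). With that continuity, the passage from radial test functions to all smooth radial $f$ is immediate because $\rho^k_{x,y}$ has compact support. Your ``locality'' on the annulus should be derived from this, not assumed.
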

We will need the following result on the support of the Dunkl translation of a compactly supported function.
\begin{lemma}\cite[Theorem~1.7]{JDA1}\label{LJJ}
	Let $f\in L^2_k(\mathbb{R}^d)$, $\operatorname{supp}f\subset B(0,r)$, and $x\in\mathbb{R}^d$. Then
	\[
	\operatorname{supp}\tau_x f \subset \mathcal{O}(B(x,r)) := \bigcup_{\sigma\in G} B(\sigma(x), r).
	\]
\end{lemma}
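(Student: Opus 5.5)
The plan is to first reduce to smooth functions, then settle the radial case with the support information in Lemma \ref{LPYZ}, and finally pass to general $f$ by duality and a finite propagation speed argument for the Dunkl wave equation. Throughout, let $d_G(y,z):=\min_{\sigma\in G}|y-\sigma(z)|$ denote the orbit distance, so that $\mathcal{O}(B(x,r))=\{y: d_G(y,x)\le r\}$.

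\textbf{Reduction to smooth functions.} By Plancherel and $|E(x,-i\xi)|\le 1$, the map $f\mapsto\tau_x f$ is bounded on $L^2_k(\mathbb{R}^d)$ with norm at most $1$. Hence it suffices to show that $\operatorname{supp}\tau_x f\subset \mathcal{O}(B(x,r+\delta))$ for every $\delta>0$ and every $f\in C_c^\infty$ with $\operatorname{supp} f\subset B(0,r+\delta)$. Letting $\delta\to0$ and using that the orbit of closed balls is closed then gives the claim. To produce such approximants from a general $f$, I would use Dunkl convolution $f*_k\varphi_\delta$ with a radial bump $\varphi_\delta$ supported in $B(0,\delta)$.

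\textbf{The radial case.} Controlling the support of this convolution is itself the radial case of the lemma, and that case follows directly from Lemma \ref{LPYZ}. For radial $\varphi$ supported in $B(0,\delta)$, the value $\tau_y\varphi(z)=\int\varphi\,d\rho^k_{z,y}$ vanishes unless $\min_g|z+gy|\le\delta$, that is, unless $d_G(z,-y)\le\delta$.

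\textbf{General $f$ by duality.} For non-radial $f$ I would test against $g\in C_c^\infty$ supported in an open set $U$ with $d_G(U,x)>r$. By Plancherel,
\[
\langle \tau_x f,g\rangle=\int E(x,-i\xi)\,D_kf(\xi)\,\overline{D_kg(\xi)}\,d\mu_k(\xi).
\]
The idea is to express the multiplier $E(x,-i\xi)$ through the Dunkl wave propagator $\cos(t\sqrt{-\Delta_k})$, equivalently through radial spectral multipliers $\psi(|\xi|)$ with $\widehat\psi$ supported in $[-t,t]$. The key input is finite propagation speed: the kernel of $\psi(\sqrt{-\Delta_k})$ is supported in $\{(y,z): d_G(y,z)\le t\}$. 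I would prove this via the standard weighted energy estimate for $\partial_t^2u=\Delta_k u$, using that the reflection terms $\frac{u(y)-u(y\sigma_\alpha)}{\langle y,\alpha\rangle}$ only couple points in the same $G$-orbit. Combining this with the radial case, one approximates $\tau_x$, localized by a radial multiplier of spectral width $t$, and shows that the pairing only sees points $y$ with $d_G(y,x)\le r+t$. Then one lets $t\to0$ through an approximate identity.

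\textbf{Expected main obstacle.} The hard step is the last one: turning finite propagation speed for radial multipliers into a support statement for the non-radial operator $\tau_x$. The difficulty is that $\tau_x$ has no explicit kernel. What I would try first is a Paley--Wiener argument. Since $D_kf$ and $D_kg$ extend to entire functions of exponential type, one can show that $y\mapsto\langle\tau_y f,g\rangle$ is the restriction of a function whose Dunkl transform is $D_kf\,\overline{D_kg}$. The Dunkl Paley--Wiener theorem then shows this function is supported in the $G$-invariant set where the orbit distance bounds combine, and the orbit structure is exactly where $\mathcal{O}(B(x,r))$ rather than $B(x,r)$ arises.
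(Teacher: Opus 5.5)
The paper does not prove this lemma; it quotes \cite[Theorem~1.7]{JDA1}. Your reduction to smooth $f$ and your radial case are fine. But the non-radial case, which is the whole content of the lemma, is not proved, and neither tool you propose can prove it.

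\textbf{Finite propagation speed adds nothing beyond the radial case.} The kernel of $\psi(\sqrt{-\Delta_k})$ is $\tau_y\Psi(z)$, where $\Psi$ is radial, $D_k\Psi=\psi(|\cdot|)$ and $\operatorname{supp}\Psi\subset B(0,t)$. So its support property is exactly what you already get from Lemma \ref{LPYZ}. The symbol $E(x,-i\xi)$ is not a function of $|\xi|$. Localizing by a radial multiplier only moves the difficulty: $\tau_x(\Psi*_kf)(z)=(\tau_x\Psi)*_kf(z)=\int\tau_x\Psi(y)\,\tau_zf^\vee(y)\,d\mu_k(y)$, and this needs the support of $\tau_z$ applied to the non-radial $f^\vee$, which is the lemma itself.

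\textbf{The Paley--Wiener fallback fails for a structural reason.} Since $|E(z,-i\zeta)|\le e^{\max_{g\in G}\langle gz,\operatorname{Im}\zeta\rangle}$, growth of a Dunkl transform only detects $G$-invariant convex hulls of supports; for de Jeu's theorem, only balls centred at $0$. Take $\operatorname{supp}g\subset B(y_0,\delta)$. Then $D_kf\,\overline{D_kg}$ has type $r+|y_0|+\delta$, so you learn only that $y\mapsto\langle\tau_yf,g\rangle$ vanishes for $|y|>r+|y_0|+\delta$. The target set $\mathcal O(B(y_0,r+\delta))$, by contrast, is a non-convex union of balls. If $G$ has no nonzero fixed vector, then $0\in\operatorname{co}(Gy_0)$. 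Hence no such argument can show that this function vanishes near $y=0$, although $\langle\tau_0f,g\rangle=\langle f,g\rangle=0$ trivially when $|y_0|>r+\delta$. Your claim that the function is ``supported where the orbit distance bounds combine'' is the support property of a Dunkl convolution of two non-radial functions, i.e.\ the lemma again.

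\textbf{What is missing, and a way to supply it.} You need a mechanism that reduces non-radial $f$ to radial functions while keeping the support in $B(0,r)$. Radial multipliers cannot do this, but Dunkl operators can.
\begin{itemize}
\item For smooth $f$ (after your reduction), expand $f=\sum_{n,j}f_{n,j}(|\cdot|)Y_{n,j}$ in $h$-harmonics. Each $f_{n,j}$ vanishes for $\rho>r$, and the series converges in $L^2_k$.
\item The Funk--Hecke formula for $h$-harmonics, together with $\frac{d}{dz}[z^{2\nu}j_\nu(z)]=2\nu z^{2\nu-1}j_{\nu-1}(z)$ (where $j_\nu(z)=\Gamma(\nu+1)(2/z)^{\nu}J_\nu(z)$), gives $Y(\mathcal T)\Phi(|\cdot|)=c_n\,Y\cdot\bigl((\rho^{-1}\partial_\rho)^n\Phi\bigr)(|\cdot|)$ for $Y$ $h$-harmonic of degree $n$.
\item Hence each term equals $Y_{n,j}(\mathcal T)\Phi_{n,j}$. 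Here $\Phi_{n,j}$ is radial, obtained up to a constant by applying $\Phi\mapsto-\int_\rho^\infty s\,\Phi(s)\,ds$ to $f_{n,j}$ $n$ times, so it is still supported in $B(0,r)$.
\item $\tau_x$ commutes with $Y(\mathcal T)$, since both are $D_k$-multipliers. Dunkl operators preserve vanishing on open $G$-invariant sets, and $\mathcal O(B(x,r))$ is $G$-invariant. So the radial case gives $\operatorname{supp}\tau_x(Y_{n,j}(\mathcal T)\Phi_{n,j})\subset\mathcal O(B(x,r))$ term by term.
\item $\|\tau_x\|_{L^2_k\to L^2_k}\le 1$ then passes the support property to the limit.
\end{itemize}

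\textbf{Fix the sign in your radial case.} Lemma \ref{LPYZ} is stated in R\"osler's convention, where $\tau_y$ generalizes $f(x+y)$. The definition $D_k(\tau_yf)=E(y,-i\,\cdot)D_kf$ used in your duality step generalizes $f(x-y)$. With that definition, a radial $\varphi$ supported in $B(0,\delta)$ has $\tau_y\varphi$ supported in $\{d_G(\cdot,y)\le\delta\}$, not $\{d_G(\cdot,-y)\le\delta\}$. Otherwise you would end up with $\mathcal O(B(-x,r))$, which differs from $\mathcal O(B(x,r))$ when $-\mathrm{id}\notin G$.
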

We recall several elementary properties of the Dunkl translation operator $\tau_y$ (see \cite{ST}), which will be used throughout the paper.
We list them below for convenience.
\begin{lemma}\label{lemma1}
	Let $f\in A_k(\mathbb{R}^d)$ and $g\in L^1_k(\mathbb{R}^d)$ be bounded. Then
	\[
	\int_{\mathbb{R}^d} \tau_y f(\xi) g(\xi) \, d\mu_k(\xi) = \int_{\mathbb{R}^d} \tau_{-y} g(\xi) f(\xi) \, d\mu_k(\xi),
	\]
	and
	\begin{equation}\label{eq1}
		\tau_y f(x) = \tau_{-x} f(-y).
	\end{equation}
\end{lemma}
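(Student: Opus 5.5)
The two identities follow directly from the Fourier-side description of $\tau_y$, namely $D_k(\tau_y f)(x)=E(y,-ix)D_k(f)(x)$, together with its integral form on $A_k(\mathbb{R}^d)$. The tools are the symmetry and homogeneity of the Dunkl kernel (Lemma \ref{EP4}), the bound $|E(ix,y)|\le 1$ for real $x,y$, and Plancherel's formula (Lemma \ref{Dunkl-transform-properties}). Throughout I keep the normalization constants $c_h$ exactly as in the integral representation of $\tau_y$ recalled above and check that they match on the two sides.

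\emph{The symmetry \eqref{eq1}.} For $f\in A_k(\mathbb{R}^d)$ the integral representation is absolutely convergent, because $D_k f\in L^1_k$ and the kernels have modulus at most one. Replacing $(x,y)$ by $(-y,-x)$ in
\[
\tau_y f(x)=\int_{\mathbb{R}^d}E(ix,\xi)E(-iy,\xi)D_k(f)(\xi)\,d\mu_k(\xi)
\]
gives the integrand $E(-iy,\xi)E(ix,\xi)D_k(f)(\xi)$. This is the same integrand with the two factors interchanged, so $\tau_y f(x)=\tau_{-x}f(-y)$ holds pointwise.

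\emph{The adjoint identity: left-hand side.}
\begin{enumerate}
\item Insert the integral representation of $\tau_y f$ into $\int\tau_y f(\xi)g(\xi)\,d\mu_k(\xi)$. Fubini applies since $g\in L^1_k$, $D_kf\in L^1_k$, and the kernels are bounded by one.
\item The inner $\xi$-integral is $\int g(\xi)E(\xi,i\eta)\,d\mu_k(\xi)=c_h^{-1}D_k(g)(-\eta)$, using Lemma \ref{EP4}.
\item Hence the left-hand side equals $c_h^{-1}\int E(-iy,\eta)\,D_k f(\eta)\,D_k g(-\eta)\,d\mu_k(\eta)$.
\end{enumerate}

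\emph{The adjoint identity: right-hand side.} Here $g$ need not lie in $A_k$. However, $g$ is bounded and in $L^1_k$, so $g\in L^2_k$, and $\tau_{-y}g$ is defined through the $L^2$ definition.
\begin{enumerate}
\item Derive the bilinear Parseval identity $\int h f\,d\mu_k=\int D_k h(\eta)\,D_k f(-\eta)\,d\mu_k(\eta)$ for $h,f\in L^2_k$. This comes from polarizing Plancherel and using $D_k(\bar f)(\eta)=\overline{D_kf(-\eta)}$, which follows from $\overline{E(x,-iy)}=E(x,iy)$ for real arguments.
\item Apply it with $h=\tau_{-y}g$. The right-hand side becomes $\int E(-y,-i\eta)D_kg(\eta)D_kf(-\eta)\,d\mu_k(\eta)$.
\item Change variables $\eta\mapsto-\eta$; $\mu_k$ is invariant because $h_k$ is even. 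Then convert $E(-y,i\eta)=E(-iy,\eta)$ via Lemma \ref{EP4}.
\item This yields the same expression as the left-hand side, up to the normalization constant.
\end{enumerate}

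\emph{Main obstacle.} The only delicate points are bookkeeping. First, the constants $c_h$ must be consistent between the integral representation of $\tau_y$ and the Parseval identity. Second, the conjugation rule for $D_k(\bar f)$ must be justified; I would verify it first on $L^1_k\cap L^2_k$ and extend it by density.
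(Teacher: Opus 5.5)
The paper gives no proof of this lemma; it is quoted from \cite{ST}. Your Fourier-side argument is correct and is the standard one. The symmetry \eqref{eq1} is immediate from the integral representation, and the adjoint identity reduces on both sides to $\int E(-iy,\eta)\,D_kf(\eta)\,D_kg(-\eta)\,d\mu_k(\eta)$, as you compute.

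\textbf{The constant must be settled, not left ``up to'' a factor.} An identity that holds up to a constant factor is not the identity. Suppose you use the integral representation of $\tau_y$ literally as displayed in the paper, with no constant in front. Then your left-hand side is $c_h^{-1}\int E(-iy,\eta)D_kf(\eta)D_kg(-\eta)\,d\mu_k(\eta)$. The Parseval computation on the right carries no constant. Since $c_h^{-1}=\int h_k^2e^{-|x|^2/2}\,dx$ is not $1$ in general (it equals $(2\pi)^{d/2}$ when $k=0$), the identity would fail as written.

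The resolution is that the displayed representation has dropped a factor. Combine the definition $D_k(\tau_yf)=E(y,-i\cdot)D_kf$ with Fourier inversion in $L^2_k$. This applies because $E(y,-i\cdot)D_kf\in L^1_k\cap L^2_k$. Together with $E(y,-i\xi)=E(-iy,\xi)$, this gives
\[
\tau_yf(x)=c_h\int_{\mathbb{R}^d}E(ix,\xi)E(-iy,\xi)D_k(f)(\xi)\,d\mu_k(\xi).
\]
With this form, the factors $c_h\cdot c_h^{-1}$ cancel and your equality is exact. The symmetry \eqref{eq1} is unaffected by the constant.

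\textbf{A cleaner alternative.} You can also avoid both the constant issue and the Fubini step. Since $f\in A_k(\mathbb{R}^d)$ is bounded, we have $f\in L^2_k(\mathbb{R}^d)$, and therefore $\tau_yf\in L^2_k(\mathbb{R}^d)$. So your bilinear Parseval identity applies on the left as well, giving $\int E(y,-i\eta)D_kf(\eta)D_kg(-\eta)\,d\mu_k(\eta)$. This equals your right-hand expression $\int E(-y,i\eta)D_kg(-\eta)D_kf(\eta)\,d\mu_k(\eta)$ by Lemma \ref{EP4}(2) with $t=-1$. This version also shows that the adjoint identity holds for all $f,g\in L^2_k(\mathbb{R}^d)$.
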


\begin{lemma}\label{bound}
	The generalized translation operator $\tau_y$, initially defined on $L^1_k(\mathbb{R}^d)\cap L^\infty_k(\mathbb{R}^d)$, can be extended to all radial functions in $L^p_k(\mathbb{R}^d)$ ($1<p<2$); and $\tau_y: L_{\text{rad}}^p(\mathbb{R}^d,d\mu_k) \to L^p_k(\mathbb{R}^d)$ is bounded with operator norm equal to $1$.
\end{lemma}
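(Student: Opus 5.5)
The statement to be proved is Lemma \ref{bound}: $\tau_y$ extends from $L^1_k\cap L^\infty_k$ to all radial functions in $L^p_k(\mathbb{R}^d)$, $1< p<2$, and $\tau_y:L^p_{\mathrm{rad}}(\mathbb{R}^d,d\mu_k)\to L^p_k(\mathbb{R}^d)$ has norm exactly $1$. The argument rests on the positive integral representation of Lemma \ref{LPYZ}, applied to radial functions.

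\textbf{Step 1: positivity and $L^\infty$ bound.} Take $f$ radial and in the Schwartz class, so that $f\in C^\infty$ and $f\in A_k(\mathbb{R}^d)$. Lemma \ref{LPYZ} gives
\[
\tau_y f(x)=\int f\,d\rho^k_{x,y},
\]
with $\rho^k_{x,y}$ a probability measure. Hence $|\tau_y f(x)|\le \|f\|_{L^\infty_k}$. Moreover, $\tau_y$ preserves positivity on radial functions: if $f\ge0$ then $\tau_y f\ge0$, and Jensen gives $|\tau_y f|^p\le \tau_y(|f|^p)$ pointwise.

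\textbf{Step 2: $L^1$ bound.} For $g\ge0$ radial, I will use the adjoint identity of Lemma \ref{lemma1} with a cut-off test function $h\to 1$:
\[
\int \tau_y g\, h\,d\mu_k=\int g\,\tau_{-y}h\,d\mu_k .
\]
I expect that, in the limit, this gives $\int\tau_y g\,d\mu_k=\int g\,d\mu_k$. A cleaner route is Plancherel-type reasoning: $D_k(\tau_y g)(0)=E(y,0)D_k g(0)=D_k g(0)$, and since $\tau_y g\ge0$ and lies in $L^1_k$, its integral equals $c_h^{-1}D_k(\tau_y g)(0)$. Thus $\|\tau_y g\|_{L^1_k}=\|g\|_{L^1_k}$ for nonnegative radial $g$.

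\textbf{Step 3: $L^p$ contraction.} Combining Steps 1 and 2, for radial Schwartz $f$,
\[
\int|\tau_y f|^p\,d\mu_k\le\int\tau_y(|f|^p)\,d\mu_k=\int|f|^p\,d\mu_k .
\]
Here $|f|^p$ is only continuous, not smooth. I will justify applying the representation to it by approximating $|f|^p$ with smooth radial nonnegative functions, using $L^1$ convergence together with compact support of the $\rho^k_{x,y}$. This yields $\|\tau_y f\|_{L^p_k}\le\|f\|_{L^p_k}$.

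\textbf{Step 4: extension and sharpness of the norm.} Radial Schwartz functions are dense in $L^p_{\mathrm{rad}}$, so $\tau_y$ extends by continuity. This extension is consistent with the $L^2$ definition on $L^1_k\cap L^\infty_k\cap L^p_{\mathrm{rad}}$, because for $p<2$ these functions also lie in $L^2_k$ and both convergences hold. For the lower bound on the norm, take $f\ge0$ radial with $\int f^p\,d\mu_k=1$ and dilate it, $f_\lambda(x)=\lambda^{(d+2\gamma_k)/p}f(\lambda x)$ with $\lambda\to0$. The translate $\tau_y f_\lambda$ then stays close to $f_\lambda$ in $L^p_k$, by continuity of translation. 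Alternatively, equality already holds at $y=0$, since $\tau_0=\mathrm{id}$. Either way the norm is exactly $1$.

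\textbf{Main obstacle.} The hardest point is Step 2 together with the approximation in Step 3. Specifically, one must justify that $\tau_y(|f|^p)$ is still given by the positive measure and is integrable, even though $|f|^p$ is not smooth and may fail to lie in $A_k$. The plan for this is monotone or dominated convergence through smooth radial approximants.
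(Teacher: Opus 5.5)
The paper gives no proof of this lemma; it quotes it from \cite{ST}. The usual argument there is interpolation. For radial $f$ the positive representation gives $|\tau_y f|\le \tau_y|f|$, so mass preservation (Lemma \ref{lemma2.1}) yields an $L^1_k$ contraction. Plancherel with $|E(y,-i\xi)|\le 1$ yields an $L^2_k$ contraction. Riesz--Thorin on the radial subspace then gives norm at most $1$ for $1\le p\le 2$.

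Your route replaces the $L^2$ endpoint and the interpolation by Jensen's inequality against the probability measure $\rho^k_{x,y}$, followed by mass preservation for $|f|^p$. This is correct and in fact stronger, since nothing in it uses $p<2$. It gives the contraction on $L^p_{\mathrm{rad}}$ for every $1\le p<\infty$, and for $p=\infty$ by your Step 1.

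The cost is that you must apply mass preservation to $|f|^p$, which is neither smooth nor obviously in $A_k(\mathbb{R}^d)$. Your approximation plan works, most cleanly via Fatou. Mollify $|f|^p$ with a radial bump to get radial nonnegative Schwartz $g_n\to|f|^p$ uniformly and in $L^1_k$. Then $\int g_n\,d\rho^k_{x,y}\to\int|f|^p\,d\rho^k_{x,y}$ for every $x$, because $\rho^k_{x,y}$ has compact support. Hence $\int\bigl(\int|f|^p\,d\rho^k_{x,y}\bigr)d\mu_k(x)\le\liminf_n\int g_n\,d\mu_k=\int|f|^p\,d\mu_k$.

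Two places need tightening.

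\textbf{Step 2.} Your ``cleaner route'' assumes $\tau_y g\in L^1_k$, which is the real content of the step. Cite Lemma \ref{lemma2.1}, which is exactly this statement for radial nonnegative $g\in A_k(\mathbb{R}^d)$. To prove it directly, pair $\tau_y g\ge 0$ with $e^{-\epsilon|x|^2}$ and evaluate the pairing by Plancherel; it tends to $c_h^{-1}D_kg(0)=\int g\,d\mu_k$ as $\epsilon\to0$. Monotone convergence then gives $\int\tau_yg\,d\mu_k=\int g\,d\mu_k<\infty$.

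\textbf{The norm.} The observation $\tau_0=\mathrm{id}$ says nothing about $y\neq0$, so the dilation argument is the one you need. It rests on two facts you should state.
\begin{enumerate}
\item The scaling identity $\tau_y(f_\lambda)=(\tau_{\lambda y}f)_\lambda$. Prove it by comparing Dunkl transforms and using $E(y,-i\xi)=E(\lambda y,-i\xi/\lambda)$. It gives $\|\tau_yf_\lambda-f_\lambda\|_{L^p_k}=\|\tau_{\lambda y}f-f\|_{L^p_k}$.
\item $\|\tau_zf-f\|_{L^p_k}\to0$ as $z\to0$ for radial $f\in C_c^\infty$ supported in $B(0,R)$. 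This follows from Lemma \ref{LPYZ}. The measure $\rho^k_{x,z}$ is supported in $\{\xi:\bigl||\xi|-|x|\bigr|\le|z|\}$, so $|\tau_zf(x)-f(x)|\le\omega(|z|)$, where $\omega$ is the modulus of continuity of the radial profile of $f$. Also $\tau_zf$ vanishes outside $B(0,R+|z|)$, so the $L^\infty$ estimate converts to an $L^p_k$ estimate.
\end{enumerate}
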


\begin{lemma}\label{lemma2.1}
	Let $f\in A_k(\mathbb{R}^d)$ be radial and nonnegative. Then $\tau_y f\ge 0$, $\tau_y f\in L^1_k(\mathbb{R}^d)$, and
	\[
	\int_{\mathbb{R}^d} \tau_y f(x) \, d\mu_k(x) = \int_{\mathbb{R}^d} f(x) \, d\mu_k(x).
	\]
\end{lemma}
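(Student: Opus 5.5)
The plan is to get positivity from Rösler's integral representation (Lemma \ref{LPYZ}) through a smoothing argument. The value of the integral will then come from a Gaussian approximation on the Fourier side, combined with monotone convergence.

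\textbf{Step 1 (positivity).} Lemma \ref{LPYZ} only applies to $C^\infty$ functions, so I first regularize $f$ with the Dunkl heat semigroup. For $t>0$ let $f_t$ be defined by $D_k(f_t)(\xi)=e^{-t|\xi|^2}D_k(f)(\xi)$, i.e. $f_t=e^{t\Delta_k}f$. I will use the following standard facts:
\begin{itemize}
\item the Dunkl heat kernel is positive and radial in the appropriate sense;
\item consequently $f_t$ is $C^\infty$, radial and nonnegative whenever $f$ is radial and nonnegative;
\item $f_t\in A_k(\mathbb{R}^d)$.
\end{itemize}
By Lemma \ref{LPYZ}, $\tau_y f_t(x)=\int f_t\,d\rho^k_{x,y}\ge 0$, since $\rho^k_{x,y}$ is a probability measure. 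On the other hand, the integral representation of $\tau_y$ on $A_k(\mathbb{R}^d)$ gives
\[
\tau_y f_t(x)=\int_{\mathbb{R}^d}E(ix,\xi)E(-iy,\xi)e^{-t|\xi|^2}D_k(f)(\xi)\,d\mu_k(\xi).
\]
Since $|E(ix,\xi)|\le 1$ and $D_k(f)\in L^1_k$, dominated convergence as $t\to0$ yields $\tau_y f_t(x)\to\tau_y f(x)$ for every $x$. Hence $\tau_y f\ge 0$ pointwise.

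\textbf{Step 2 (integral and integrability).} Take the Gaussians $g_s(x)=e^{-s|x|^2}$, $s>0$. Their Dunkl transforms are $D_k(g_s)(\xi)=(2s)^{-\gamma_k-d/2}e^{-|\xi|^2/4s}$, and these satisfy $c_h\int D_k(g_s)\,d\mu_k=1$. Both $\tau_y f$ and $g_s$ lie in $L^2_k$, so Plancherel's formula and the definition of $\tau_y$ give
\[
\int \tau_y f\, g_s\,d\mu_k=\int E(y,-i\xi)\,D_k(f)(\xi)\,D_k(g_s)(\xi)\,d\mu_k(\xi).
\]
As $s\to0$ the family $c_hD_k(g_s)$ is an approximate identity at $\xi=0$ for $\mu_k$: the mass concentrates in balls of radius $\sim\sqrt s$. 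Moreover $\xi\mapsto E(y,-i\xi)D_k(f)(\xi)$ is bounded and continuous, using Lemma \ref{Dunkl-transform-properties}(1). So the right-hand side tends to $c_h^{-1}E(y,0)D_k(f)(0)=\int f\,d\mu_k$. On the left, $\tau_y f\ge0$ and $g_s\uparrow1$ as $s\downarrow0$, so monotone convergence gives $\int\tau_y f\,d\mu_k=\int f\,d\mu_k<\infty$. In particular $\tau_y f\in L^1_k$.

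\textbf{Main obstacle.} The delicate point is Step 1: justifying that $f_t$ is a nonnegative radial smooth function to which Lemma \ref{LPYZ} applies. This needs positivity and radiality of the Dunkl heat kernel, which I would take from the literature (R\"osler) rather than re-prove. If that is unavailable, the fallback is to mollify $D_k f$ by a compactly supported radial cutoff. That route, however, loses positivity, and would need an extra argument, for instance writing the cutoff as the square of a positive-definite function.
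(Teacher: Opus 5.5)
The paper gives no proof of this lemma; it is quoted from Thangavelu--Xu. In that source, positivity is read off directly from R\"osler's representation of translates of radial functions: an average of the radial profile against the positive representing measures of the intertwining operator. Your proof is correct. It differs mainly in how you reach positivity.

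\textbf{Step 1 versus the source.} You smooth with $e^{t\Delta_k}$ so that Lemma \ref{LPYZ} is applied only to a radial $C^\infty$ function. You then remove the smoothing on the Fourier side, using $|E(ix,\xi)E(-iy,\xi)|\le 1$ and $D_k f\in L^1_k$. This costs two extra standard inputs: positivity of the Dunkl heat kernel, and the fact that $D_k$ maps radial functions to radial functions (which is what makes $f_t$ radial). In return it uses nothing beyond the tools the paper records. You are also right to insist on radiality: Lemma \ref{LPYZ} as printed can only hold for radial $f$, because $\rho^k_{x,y}$ is a radial measure. The explicit-formula route of the source gives more, namely a pointwise formula for $\tau_y f$ with $f$ radial. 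Your route needs R\"osler's formula only on a nice class plus a soft limit argument.

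\textbf{One line to add.} If you quote R\"osler's theorem in its original form (radial Schwartz functions, or Trim\`eche's translation on $C^\infty$), check that it computes the $A_k$-translation of $f_t$. Replace $f_t$ by $\chi f_t$, with $\chi$ radial, smooth and $\equiv 1$ on a large ball $B(0,R)$. By duality and Lemma \ref{LJJ}, $\tau_y\bigl((1-\chi)f_t\bigr)=0$ a.e. on $B(0,R-|y|)$. Both translates are continuous, so they agree on that ball.

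\textbf{Step 2.} The Gaussian approximate identity on the Fourier side, followed by monotone convergence using the positivity from Step 1, is correct as written. It is exactly what is needed to get $\tau_y f\in L^1_k$ without assuming it.
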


\subsection{Dunkl convolution}
Using the Dunkl translation operator, we define the Dunkl convolution of functions as follows (see \cite{ST}).
\begin{definition}
	For $f,g\in L^2_k(\mathbb{R}^d)$,
	\[
	f *_k g(x) := \int_{\mathbb{R}^d} f(y) \tau_x g^{\vee}(y) \, d\mu_k(y), \qquad g^{\vee}(y)=g(-y).
	\]
\end{definition}
An equivalent expression is
\[
f *_k g(x) = \int_{\mathbb{R}^d} D_k(f)(\xi) D_k(g)(\xi) E(ix,\xi) \, d\mu_k(\xi).
\]
The following lemma summarizes the basic properties of the Dunkl convolution.
\begin{lemma}\label{LJD1}
	For $f,g\in L^2_k(\mathbb{R}^d)$:
	\begin{enumerate}
		\item $D_k(f *_k g) = D_k(f) D_k(g)$;
		\item $f *_k g = g *_k f$.
	\end{enumerate}
\end{lemma}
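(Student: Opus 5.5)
The plan is to prove both items through the Fourier-side expression
\[
f *_k g(x) = \int_{\mathbb{R}^d} D_k(f)(\xi)\, D_k(g)(\xi)\, E(ix,\xi)\, d\mu_k(\xi).
\]
Once this formula is justified for all $f,g\in L^2_k(\mathbb{R}^d)$, item 2 is immediate, because the right-hand side is symmetric in $f$ and $g$. Item 1 will follow by reading the formula as an inverse Dunkl transform. So the work lies in deriving this expression from the definition via Plancherel, and then identifying $D_k(f*_k g)$.

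\textbf{Step 1 (a bilinear Parseval identity).} From Lemma \ref{Dunkl-transform-properties} (Plancherel) and polarization we have $\int f\,\overline{h}\,d\mu_k=\int D_k f\,\overline{D_k h}\,d\mu_k$ for $f,h\in L^2_k$. I would combine this with the conjugation rule $\overline{D_k(\bar h)(\xi)}=D_k(h)(-\xi)$. That rule follows from $\overline{E(x,-iy)}=E(x,iy)$ (the kernel is real on real arguments) together with Lemma \ref{EP4}. The result is the bilinear form
\[
\int f\,h\,d\mu_k=\int D_k f(\xi)\,D_k h(-\xi)\,d\mu_k(\xi).
\]
In the same way, the $G$-invariance of $h_k$ and the homogeneity in Lemma \ref{EP4} give $D_k(g^\vee)(\xi)=D_k g(-\xi)$. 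All of these are first checked on $L^1_k\cap L^2_k$ and then extended by density.

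\textbf{Step 2 (the convolution formula).} Apply Step 1 with $h=\tau_x g^\vee$, which lies in $L^2_k$ because $|E(y,-i\xi)|\le 1$ and $D_k(\tau_x g^\vee)=E(x,-i\cdot)D_k(g^\vee)$. Then
\[
D_k(\tau_x g^\vee)(-\xi)=E(x,i\xi)\,D_k(g^\vee)(-\xi)=E(ix,\xi)\,D_k g(\xi),
\]
again using Lemma \ref{EP4}. Substituting this into Step 1 gives the displayed expression. Since $D_kf\,D_kg\in L^1_k$ by Cauchy--Schwarz, the integral converges absolutely. In particular $f*_k g$ is bounded and continuous, and it is $c_h^{-1}$-normalised as the inverse Dunkl transform of the $L^1_k$ function $D_kf\,D_kg$; I would track the normalising constant $c_h$ carefully here.

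\textbf{Step 3 (identifying the transform).} In general the product $D_kf\,D_kg$ lies only in $L^1_k$ and not in $L^2_k$, so ``$D_k(f*_kg)$'' has to be understood in the sense of tempered distributions. This is the step I expect to be the main obstacle. I would test against $\varphi$ in the Schwartz class, on which $D_k$ is a bijection. By Fubini, which is justified by the absolute integrability from Step 2 and the bound $|E|\le1$,
\[
\int (f*_kg)\,D_k\varphi\,d\mu_k=\int D_kf\,D_kg\,\varphi\,d\mu_k,
\]
and this is exactly the statement $D_k(f*_kg)=D_kf\,D_kg$ in the weak sense. When additionally $D_kf\,D_kg\in L^2_k$ (for instance when $f$ or $g$ lies in $A_k$), this identity holds in $L^2_k$ by injectivity of $D_k$ on $L^2_k$. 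Commutativity then follows as noted at the start.
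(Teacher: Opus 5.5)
Your argument is correct and follows the route the paper relies on. The paper gives no proof of its own: it cites Thangavelu--Xu and records exactly the Fourier-side formula $f*_kg(x)=\int D_kf(\xi)\,D_kg(\xi)\,E(ix,\xi)\,d\mu_k(\xi)$ that you derive in Step~2, from which commutativity is immediate and item~1 follows by inversion.

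One correction concerns the constant you said you would track but then dropped in Step~3. With the paper's normalisation, the inversion formula gives $\int E(ix,\xi)\,D_k\varphi(x)\,d\mu_k(x)=c_h^{-1}\varphi(\xi)$. So your Step~3 identity is really $\int (f*_kg)\,D_k\varphi\,d\mu_k=c_h^{-1}\int D_kf\,D_kg\,\varphi\,d\mu_k$. This agrees with your own remark in Step~2 that $f*_kg$ is $c_h^{-1}$ times an inverse Dunkl transform.

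Your method therefore correctly yields $D_k(f*_kg)=c_h^{-1}D_kf\,D_kg$; for $k=0$ this is the familiar factor $(2\pi)^{d/2}$. Item~1 as printed holds exactly only after rescaling the convolution or the measure by $c_h$. This is a normalisation slip in the statement, not a flaw in your argument.
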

\section{Strong annihilating pairs for the Dunkl transform}\label{sec3}
In this section, we establish the strong annihilating pair property for the Dunkl transform associated with $(\varepsilon,k)$-thin sets. 
The argument relies on a Littlewood--Paley decomposition, suitable kernel estimates, and the geometric properties of thin sets. 
We begin with several auxiliary lemmas that will be used throughout the proof.

We consider a pair of orthogonal projections on $L^{2}_{k}(\mathbb{R}^{d})$ defined by:
\[
E_S f = \chi_S f, \quad D_{k}(F_\Sigma f) = \chi_\Sigma D_{k}(f),
\]
where $S$ and $\Sigma$ are measurable subsets of $\mathbb{R}^{d}$.
\begin{lemma}\label{equ}
	Let $S$ and $\Sigma$ be measurable subsets of $\mathbb{R}^{d}$. Then, the following assertions are equivalent:
	\begin{enumerate}
		\item[(1)] $\|F_\Sigma E_S\| < 1$;
		\item[(2)] There exists a constant $D(k,d,S, \Sigma)$ such that for all $f \in L^{2}_{k}(\mathbb{R}^{d})$ supported in $S$,
		\[
		\|f\|_{L^{2}_{k}} \leq D(k, d, S, \Sigma) \|F_{\Sigma^c} f\|_{L^{2}_{k}};
		\]
		\item[(3)] $(S, \Sigma)$ is a strong annihilating pair, i.e., there exists a constant $C(k,d,S, \Sigma)$ such that for all $f \in L^{2}_{k}(\mathbb{R}^{d})$,
		\[
		\|f\|_{L^{2}_{k}} \leq C(k,d, S, \Sigma) \left( \|E_{S^c} f\|_{L^{2}_{k}} + \|F_{\Sigma^c} f\|_{L^{2}_{k}} \right).
		\]
	\end{enumerate}
	Moreover, one may take $D(k,d, S, \Sigma) = (1 - \|F_\Sigma E_S\|)^{-1}$ and $C(k,d,S, \Sigma) = 1 + D(k,d, S, \Sigma)$.
\end{lemma}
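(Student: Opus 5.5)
We prove the equivalences through the chain (1)$\Rightarrow$(2)$\Rightarrow$(3)$\Rightarrow$(1). The only tools are that $E_S$ and $F_\Sigma$ are orthogonal projections on $L^2_k(\mathbb{R}^d)$, together with the Plancherel formula of Lemma \ref{Dunkl-transform-properties}. From Plancherel, $F_\Sigma$ is unitarily conjugate (via $D_k$) to multiplication by $\chi_\Sigma$, so it is an orthogonal projection and $F_\Sigma + F_{\Sigma^c} = I$. Likewise $E_S + E_{S^c} = I$.

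\textbf{(1)$\Rightarrow$(2).} Let $f$ be supported in $S$, so that $f = E_S f$. Then
\[
\|f\| \le \|F_\Sigma f\| + \|F_{\Sigma^c} f\| = \|F_\Sigma E_S f\| + \|F_{\Sigma^c} f\| \le \|F_\Sigma E_S\|\,\|f\| + \|F_{\Sigma^c} f\|.
\]
Since $\|F_\Sigma E_S\| < 1$, we can rearrange to get $\|f\| \le (1-\|F_\Sigma E_S\|)^{-1}\|F_{\Sigma^c} f\|$. This gives (2) with the stated constant $D$.

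\textbf{(2)$\Rightarrow$(3).} For arbitrary $f$, write $f = E_S f + E_{S^c} f$. Applying (2) to $E_S f$, which is supported in $S$, gives
\[
\|E_S f\| \le D\,\|F_{\Sigma^c} E_S f\| \le D\bigl(\|F_{\Sigma^c} f\| + \|F_{\Sigma^c} E_{S^c} f\|\bigr) \le D\bigl(\|F_{\Sigma^c} f\| + \|E_{S^c} f\|\bigr),
\]
where the last step uses $\|F_{\Sigma^c}\| \le 1$. Therefore
\[
\|f\| \le \|E_{S^c} f\| + \|E_S f\| \le (1+D)\bigl(\|E_{S^c} f\| + \|F_{\Sigma^c} f\|\bigr),
\]
which is (3) with $C = 1+D$.

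\textbf{(3)$\Rightarrow$(1).} This is the only direction needing care, since $\|F_\Sigma E_S\| \le 1$ holds automatically and we must exclude equality. Apply (3) to $g = E_S f$ for an arbitrary $f$. Because $E_{S^c} g = 0$, (3) reduces to $\|E_S f\| \le C\|F_{\Sigma^c} E_S f\|$. By orthogonality,
\[
\|E_S f\|^2 = \|F_\Sigma E_S f\|^2 + \|F_{\Sigma^c} E_S f\|^2,
\]
and hence
\[
\|F_\Sigma E_S f\|^2 \le (1 - C^{-2})\,\|E_S f\|^2 \le (1 - C^{-2})\,\|f\|^2.
\]
It follows that $\|F_\Sigma E_S\| \le \sqrt{1-C^{-2}} < 1$, which is (1). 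A normalization check confirms $C \ge 1$: taking $f \ne 0$ supported in $S$ gives $\|f\| \le C\|F_{\Sigma^c} f\| \le C\|f\|$. If $E_S = 0$, then (1) holds trivially.

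The "moreover" part of the statement is exactly the pair of constants produced in the first two steps, $D = (1-\|F_\Sigma E_S\|)^{-1}$ and $C = 1 + D$. The only point requiring attention in the whole argument is the Pythagorean splitting in the last step, which relies on $F_\Sigma$ being an orthogonal projection; that in turn follows from Plancherel.
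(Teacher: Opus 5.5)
Your proof is correct and is essentially the paper's approach: the paper's proof just invokes an abstract lemma on two orthogonal projections in a Hilbert space with $P=E_S$, $Q=F_\Sigma$. Your chain (1)$\Rightarrow$(2)$\Rightarrow$(3)$\Rightarrow$(1) is the standard proof of that lemma written out, and it yields the same constants $D=(1-\|F_\Sigma E_S\|)^{-1}$ and $C=1+D$, using only that $E_S$ and $F_\Sigma$ are orthogonal projections, the latter following from the unitarity of $D_k$ on $L^2_k$.
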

\begin{proof}
	We just need to apply \cite[p.88]{VB} with $P=E_S, Q=F_\Sigma$. The above conclusions follow directly.
\end{proof}
\begin{lemma}\label{cutoff}
Let \(\ell\in\mathbb{N}\)  There exists a constant \(C>0\) such that for every \(t>0\),
	\[
	|\tau_x\chi_{B_t(0,2^\ell)}(y)|
	\le
	C2^{\ell(2d+2\gamma_k)}\,\mu_k\bigl(B(x,t)\bigr)^{-1},
	\qquad x,y\in\mathbb R^d,
	\]
	where \(\chi_{B_t(0,2^\ell)}(x)=t^{-\mathcal N}\chi_{B(0,2^\ell)}(x/t)\) and \(\mathcal N=d+2\gamma_k\).
\end{lemma}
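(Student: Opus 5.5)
The plan is to reduce the estimate to the known smooth case, \cite[Theorem 4.1]{AJ}, by a pointwise domination argument. This uses two facts: the generalized translation of a \emph{radial} function is given by integration against the probability measure $\rho^k_{x,y}$ of Lemma \ref{LPYZ}, and this representation is positivity preserving. I take \cite[Theorem 4.1]{AJ} in the following form. For a fixed radial $\varphi\in C_c^\infty(\mathbb R^d)$ with $0\le\varphi\le1$ and $\operatorname{supp}\varphi\subset B(0,1)$, and $\varphi_s(z)=s^{-\mathcal N}\varphi(z/s)$, one has
\[
|\tau_x\varphi_s(y)|\le C\,\mu_k\bigl(B(x,s)\bigr)^{-1},\qquad x,y\in\mathbb R^d,\ s>0.
\]
If the cited theorem is stated with extra decay factors, I would simply discard them.

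\textbf{Step 1 (majorant).} Choose a radial $\psi\in C_c^\infty$ with $0\le\psi\le1$, $\psi\equiv1$ on $B(0,1)$, and $\operatorname{supp}\psi\subset B(0,2)$. Set $\Psi(z)=\psi(z/2^{\ell})$, so that $\chi_{B(0,2^\ell)}\le\Psi$ and $\Psi=\varphi(\cdot/2^{\ell+1})$ with $\varphi:=\psi(2\,\cdot)$ supported in $B(0,1)$. The rescaled majorant is then
\[
t^{-\mathcal N}\Psi(z/t)=2^{(\ell+1)\mathcal N}\,\varphi_{2^{\ell+1}t}(z).
\]

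\textbf{Step 2 (positivity and monotonicity).} I want to show that for radial $0\le f\le g$ one has $0\le\tau_xf(y)\le\tau_xg(y)$. For smooth radial functions this is immediate from \eqref{eq:5.1}, since $\rho^k_{x,y}$ is a positive measure. For $f=\chi_{B_t(0,2^\ell)}$ I would approximate by an increasing sequence of smooth radial functions $0\le f_n\uparrow f$ that are all dominated by the majorant. These converge to $f$ in $L^p_k$ for some $1<p<2$. By Lemma \ref{bound}, $\tau_xf_n\to\tau_xf$ in $L^p_k$, so a subsequence converges almost everywhere. Meanwhile $\int f_n\,d\rho^k_{x,y}\uparrow\int f\,d\rho^k_{x,y}$ by monotone convergence. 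Hence $\tau_xf(y)=\int f\,d\rho^k_{x,y}$ for a.e.\ $y$, and therefore $0\le\tau_x f\le\tau_x\bigl(t^{-\mathcal N}\Psi(\cdot/t)\bigr)$. I expect this step to be the main obstacle, because it is exactly where the smoothness assumption of \cite{AJ} is removed. The delicate point is that $\tau_xf$ is only defined as an $L^p$ or $L^2$ function, so the bound holds almost everywhere in $y$. If needed, one can use the right-continuous representative $\int f\,d\rho^k_{x,y}$ to obtain it everywhere.

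\textbf{Step 3 (conclusion).} Combining Steps 1 and 2 with the smooth estimate gives
\[
|\tau_x\chi_{B_t(0,2^\ell)}(y)|\le 2^{(\ell+1)\mathcal N}\,\tau_x\varphi_{2^{\ell+1}t}(y)\le C\,2^{\ell\mathcal N}\,\mu_k\bigl(B(x,2^{\ell+1}t)\bigr)^{-1}.
\]
The ball $B(x,2^{\ell+1}t)$ contains $B(x,t)$, so $\mu_k(B(x,2^{\ell+1}t))^{-1}\le\mu_k(B(x,t))^{-1}$. Finally, $2^{\ell\mathcal N}=2^{\ell(d+2\gamma_k)}\le 2^{\ell(2d+2\gamma_k)}$, which yields the stated bound.

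A cruder version of the argument, valid if one prefers not to use the inclusion of balls, uses the doubling estimate from \eqref{vs}. That estimate gives $\mu_k(B(x,2^{\ell+1}t))\ge c\,\mu_k(B(x,t))$, and any polynomial loss in $2^\ell$ is absorbed by the margin between $2^{\ell\mathcal N}$ and $2^{\ell(2d+2\gamma_k)}$.
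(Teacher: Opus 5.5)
Your proposal is correct and follows essentially the paper's route: dominate $\chi_{B(0,2^\ell)}$ by the smooth bump $\psi(2^{-\ell}\cdot)$, apply \cite[Theorem~4.1]{AJ}, and transfer the bound to the indicator using positivity of the Dunkl translation on radial functions. The differences are minor and work in your favour. Writing the majorant as $2^{(\ell+1)\mathcal N}\varphi_{2^{\ell+1}t}$ for a fixed profile $\varphi$ gives the sharper factor $2^{\ell\mathcal N}$, whereas the paper uses an $\ell$-dependent normalization $C_{d,\mathcal N,s,\ell}\sim 2^{\ell(d+\mathcal N)}$. Your Step 2 also proves the monotonicity $0\le\tau_x\chi_{B_t(0,2^\ell)}\le\tau_x\bigl(t^{-\mathcal N}\Psi(\cdot/t)\bigr)$ via Lemma \ref{LPYZ} and approximation; the paper uses this without comment, and it genuinely requires $\psi$ to be radial, which you assume but the paper never states.
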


\begin{proof}
	Let \(\psi\in C_c^\infty(\mathbb{R}^d)\) be a function satisfying  
	\[
	0\leq \psi \leq 1,\qquad 
	\psi(x)=1\;\text{for }|x|\leq 1,\qquad 
	\psi(x)=0\;\text{for }|x|\geq 2.
	\]
	
	For \(\ell\ge 0\) define  
	\[
	g_\ell(x):=\psi(2^{-\ell}x).
	\]
	Then we have the pointwise bounds  
	\[
	\chi_{B(0,2^\ell)}(x)\;\leq\;g_\ell(x)\;\leq\;\chi_{B(0,2^{\ell+1})}(x).
	\]
	Indeed, if \(x\in B(0,2^\ell)\) then \(|2^{-\ell}x|<1\) and hence \(g_\ell(x)=1\); if \(x\notin B(0,2^{\ell+1})\) then \(|2^{-\ell}x|\ge 2\) and \(g_\ell(x)=0\).
	
	For any multi‑index \(\beta\) we have  
	\[
	\partial^\beta g_\ell(x)=2^{-\ell|\beta|}(\partial^\beta\psi)(2^{-\ell}x),
	\]
	so that  
	\[
	|\partial^\beta g_\ell(x)|\le 2^{-\ell|\beta|}\sup_{y\in\mathbb{R}^d}|\partial^\beta\psi(y)|.
	\]
	
	If \(|x|\ge 2^{\ell+1}\) then \(g_\ell(x)=0\), and consequently  
	\[
	|\partial^\beta g_\ell(x)|\le (1+|x|)^{-d-|\beta|-\mathcal N}.
	\]
	
	If \(|x|<2^{\ell+1}\) we estimate  
	\[
	(1+|x|)^{-d-|\beta|-\mathcal N}\ge(1+2^{\ell+1})^{-d-|\beta|-\mathcal N}\ge(2^{\ell+2})^{-d-|\beta|-\mathcal N},
	\]
	and therefore  
	\begin{align*}
		|\partial^\beta g_\ell(x)|
		&\le 2^{-\ell|\beta|}\sup_{y\in\mathbb{R}^d}|\partial^\beta\psi(y)|\\
		&\le 2^{\ell(d+\mathcal N)+2(d+|\beta|+\mathcal N)}\sup_{y\in\mathbb{R}^d}|\partial^\beta\psi(y)|\;(1+|x|)^{-d-|\beta|-\mathcal N}.
	\end{align*}
	
	Now let \(s\) be an even integer greater than $\mathcal N$ and set 
	\[
	C_{d,\mathcal N,s,\ell}:=\sup_{|\beta|\le s}\;
	2^{\ell(d+\mathcal N)+2(d+|\beta|+\mathcal N)}\sup_{y\in\mathbb{R}^d}|\partial^\beta\psi(y)|=2^{\ell(d+\mathcal N)+2(d+s+\mathcal N)}\sup_{|\beta|\le s}\sup_{y\in\mathbb{R}^d}|\partial^\beta\psi(y)|.
	\]
	Define  
	\[
	h^\ell(x):=\frac{g_\ell(x)}{C_{d,\mathcal N,s,\ell}}.
	\]
	Then \(h^\ell\) satisfies the hypotheses of \cite[Theorem~4.1]{AJ} with \(\kappa=0\), and we have  
	\[
	\chi_{B(0,2^\ell)}(x)\;\le\;C_{d,\mathcal N,s,\ell}\,h^\ell(x)\;\le\;\chi_{B(0,2^{\ell+1})}(x).
	\]
	
	Applying \cite[Theorem~4.1]{AJ} to \(h^\ell\) yields  
	\[
	|\tau_x h^\ell_t(y)|\le C\,\mu_k\bigl(B(x,t)\bigr)^{-1},
	\]
	where \(h^\ell_t(x)=t^{-\mathcal N}h^\ell(x/t)\) and the constant \(C>0\) is the one provided by that theorem.  
	
	Finally, recalling that \(\chi_{B_t(0,2^\ell)}(x)=t^{-\mathcal N}\chi_{B(0,2^\ell)}(x/t)\), we obtain  
	\[
	|\tau_x\chi_{B_t(0,2^\ell)}(y)|
	\le C_{d,\mathcal N,s,\ell}\;|\tau_x h^\ell_t(y)|
	\le C\cdot C_{d,\mathcal N,s,\ell}\;\mu_k\bigl(B(x,t)\bigr)^{-1}.
	\]
	This completes the proof.  
\end{proof}
\begin{remark}\label{re1}
	More generally, if $g\in L_k^\infty(\mathbb{R}^d)$ is supported in $B(0,R)$, then
	\[
	|\tau_x g_t(y)|
	\le
	C_R
	\frac{\displaystyle \|g\|_{L^\infty_k(\mathbb{R}^d)}}
	{\mu_k\bigl(B(x,t)\bigr)},
	\]
	where  $C_R>0$ depends  on $R$, $d$, and $k$, and
	\(	g_t(x)=t^{-\mathcal N}g\!\left(\frac{x}{t}\right)\).

	Indeed, writing
	\[
	g=g_{+}-g_{-},
	\]
	where
	\[
	g_{+}=\max\{g,0\},
	\qquad
	g_{-}=\max\{-g,0\},
	\]
	we have
	\[
	0\le g_{\pm}(x)\le \|g\|_{L^\infty_k(\mathbb{R}^d)}\chi_{B(0,R)}(x).
	\]
Applying Lemma \ref{cutoff} to the characteristic function of the support then yields the desired estimate.
	
	Compared with Theorem~4.1 in \cite{AJ}, the above estimate removes the smoothness assumption when the function $g$ is compactly supported.
\end{remark}
We are now in a position to prove the main result of this section.
\begin{theorem}\label{pair}
	If $\varepsilon$ is sufficiently small and $S,\Sigma \subset \mathbb{R}^d$ are $(\varepsilon,k)$--thin sets,
	then $(S,\Sigma)$ is a strong annihilating pair for the Dunkl transform.
	More precisely, there exists $C=C(k, d, S, \Sigma)>0$ such that for every 
	$f\in L^2_k(\mathbb{R}^d)$, one has
\begin{equation}\label{pair1}
	\|f\|_{L^2_k}
	\le C
	\Big(
	\|\chi_{S^c} f\|_{L^2_k}
	+
	\|\chi_{\Sigma^c} D_k f\|_{L^2_k}
	\Big).
\end{equation}
\end{theorem}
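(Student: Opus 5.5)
By Lemma \ref{equ}, it suffices to prove $\|F_\Sigma E_S\|<1$, or equivalently a bound of the form
\[
\|\chi_\Sigma D_k f\|_{L^2_k}\le \delta\,\|f\|_{L^2_k}
\qquad\text{for all } f \text{ supported in } S,
\]
with some $\delta<1$ once $\varepsilon$ is small. This follows the Shubin--Vakilian--Wolff argument for the Fourier transform, adapted to the Dunkl setting with Lemma \ref{cutoff} and Remark \ref{re1} replacing the explicit kernel.

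\textbf{Step 1: decomposition.} Take a smooth Littlewood--Paley partition of unity in frequency, $1=\sum_{j\ge0}\widehat\phi_j(\xi)$, where $\widehat\phi_0$ is supported in $|\xi|\le2$ and $\widehat\phi_j$ in $2^{j-1}\le|\xi|\le2^{j+1}$. Set $f_j=D_k^{-1}(\widehat\phi_j D_kf)=f*_k\phi_j$. In the annulus $|\xi|\sim2^j$ the thinness scale is $\rho(\xi)\sim2^{-j}$. Cover the annulus by balls $B(\xi_m,c2^{-j})$ with bounded overlap, using the doubling property \eqref{vs}. Thinness then gives $\mu_k(\Sigma\cap B)\le\varepsilon\,\mu_k(B)$ on each ball.

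\textbf{Step 2: a local reverse-Bernstein estimate.} We want, for $g$ with $D_kg$ essentially localized at scale $2^j$ and a thin set $\Sigma$,
\[
\|\chi_\Sigma D_k g\|^2\lesssim \varepsilon^{\theta}\|D_kg\|^2 + \text{tails}.
\]
On each small ball $B$, $D_k g$ is nearly constant at scale $2^{-j}$, because $g$ lives essentially in $|x|\lesssim2^{j}$ by the dual thinness of $S$. So $\int_{\Sigma\cap B}|D_kg|^2\le\mu_k(\Sigma\cap B)\sup_B|D_kg|^2$. The supremum is controlled by a local average, and the average by $\mu_k(B)^{-1}\int_{\tilde B}|D_kg|^2$. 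To make this precise, write $D_kg$ as a Dunkl convolution in the $\xi$-variable of itself with a reproducing kernel $\chi_{B_t(0,2^\ell)}$ at scale $t=2^{-j}$. Lemma \ref{cutoff} and Remark \ref{re1} give
\[
|\tau_\xi\chi_{B_t}(\eta)|\lesssim \mu_k(B(\xi,t))^{-1},
\]
and Lemma \ref{LJJ} gives the support of $\tau_\xi\chi_{B_t}$ inside $\mathcal O(B(\xi,c t))$. Cauchy--Schwarz then yields $\sup_B|D_kg|^2\lesssim\mu_k(B)^{-1}\int_{\mathcal O(\tilde B)}|D_kg|^2$. The $G$-orbit only multiplies the overlap count by $|G|$.

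\textbf{Step 3: spatial side and assembly.} Symmetrically, for $f$ supported in the thin set $S$, split $f$ dyadically in space, $f=\sum_j f\chi_{\{|x|\sim2^j\}}$. In the shell $|x|\sim2^j$ the thinness scale of $S$ is $2^{-j}$, so the same Step 2 mechanism applied to $D_k^{-1}$ gives, with $P_{\le j}$ a frequency cutoff to $|\xi|\lesssim 2^{j}$,
\[
\|P_{\le j}(f\chi_{|x|\sim2^j})\|\lesssim\varepsilon^{\theta}\|f\chi_{|x|\sim2^j}\|.
\]
Hence most of the energy of each piece sits at frequency $\gg2^j$. There, Step 2 shows $\chi_\Sigma$ captures only an $\varepsilon^\theta$-fraction. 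Summing over $j$ with almost-orthogonality, controlled by the Plancherel formula in Lemma \ref{Dunkl-transform-properties} and the off-diagonal decay of the dyadic pieces, gives $\|F_\Sigma E_S\|\le C\varepsilon^{\theta}+\text{(cross terms)}<1$ for small $\varepsilon$. Lemma \ref{equ} then yields \eqref{pair1}.

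\textbf{Main obstacle.} The hardest step is making Step 2 rigorous. The Dunkl translation has no explicit kernel and is not positivity-preserving for general functions. The mismatch between scales also matters: for $|\xi|\sim2^j$ the ball radius $2^{-j}$ is smaller than $|\xi|$, so $\mu_k(B(\xi,2^{-j}))$ is non-uniform near the reflecting hyperplanes. I would first try to reduce everything to positive radial kernels, where Lemma \ref{lemma2.1} gives mass conservation and Lemma \ref{cutoff} gives pointwise bounds. Combined with the doubling estimate \eqref{vs}, this should bound the cross-scale interaction terms by a geometric series in $2^{-|j-j'|}$.
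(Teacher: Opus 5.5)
Your local mechanisms are the right ones: spatial thinness of $S$ makes the low-frequency part of a shell piece small, and on $|\xi|\gtrsim 2^j$ the transform of a piece localized in $|x|\lesssim 2^j$ is essentially constant at the thinness scale of $\Sigma$. But there is a genuine gap exactly where you write ``$+$(cross terms)''. Summing the pieces \emph{is} the content of the theorem, and your argument does not control it.

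With $h_j=f\chi_{\{|x|\sim 2^j\}}$, the functions $\chi_\Sigma D_k(P_{>j}h_j)$ all live on the common region $|\xi|\gtrsim 2^{\max(j,j')}$. At a frequency $|\xi|\sim 2^i$ roughly $i$ of them are present, so estimating them separately and adding loses a factor growing like $\log|\xi|$. Almost-orthogonality does not rescue this. The orthogonality of $D_kh_j$ and $D_kh_{j'}$ comes from the spatial separation of $h_j$ and $h_{j'}$, a global cancellation that is destroyed once you multiply by the rough function $\chi_\Sigma$. So nothing in your argument produces decay like $2^{-|j-j'|}$.

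The same problem is hidden in Step 2. $P_jf$ is not localized in $|x|\lesssim 2^j$, and the ``tails'' you discard are the contributions of all shells $j'>j$, i.e.\ the same cross terms again. Also, a characteristic function $\chi_{B_t(0,2^\ell)}$ cannot serve as a reproducing kernel, since its Dunkl transform is not identically $1$ on any ball. One must reproduce with a Schwartz function such as $\phi_j$, and only then use Lemma~\ref{cutoff}, via $|\phi|\le C_T\sum_m 2^{-mT}\chi_{B(0,2^m)}$ and $|\tau_x\phi_j|\le \tau_x|\phi_j|$ for radial $\phi_j$.

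The missing idea is to decompose the identity operator rather than the function. Following Shubin--Vakilian--Wolff, the paper writes $I=L+T$ with $Lf=\sum_j\psi_j\cdot(\phi_j*_kf)$. The spatial cutoffs here are the \emph{same} smooth Littlewood--Paley functions $\psi_j=D_k\phi_j-D_k\phi_{j-1}$. They form a partition of unity whose Dunkl transforms telescope, $\sum_{j=0}^{i-1}D_k\psi_j=\phi_{i-1}$. Consequently the frequency-side kernel of $T$ collapses to $M_N(\xi,\eta)=\sum_{i\ge 1}\psi_i(\eta)\,\tau_\xi\phi_{\min\{i-1,N\}}(\eta)$. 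This has the same structure as the spatial kernel $A_N(x,y)=\sum_j\psi_j(x)\tau_x\phi_j(y)$: at $|\eta|\sim 2^i$ only a single bump of scale $2^{-i}$ survives, instead of your $\sim i$ overlapping pieces.

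This is also why the cutoffs must be smooth. With your $\chi_{\{|x|\sim 2^j\}}$ the partial sums are $\chi_{B(0,2^{i})}$, and $D_k\chi_{B(0,R)}\notin L^1_k$, so no Schur-type bound is available.

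With the telescoped kernels, the uniform bounds (ii) and (v) for $A_N$, and (iii) and (vi) for $M_N$, give by Schur's test alone
$\|L(\chi_Sf)\|_{L^2_k}\le C\varepsilon^{1/2}\|f\|_{L^2_k}$ and $\|\chi_\Sigma D_kT(\chi_Sf)\|_{L^2_k}\le C\varepsilon^{1/2}\|f\|_{L^2_k}$. Hence $\|F_\Sigma E_S\|\le C\varepsilon^{1/2}$, with no cross terms to control.
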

\begin{proof}
	The proof follows the general strategy of Theorem 2.1 in \cite{SVW},
	with appropriate modifications in the Dunkl transform setting.\\
\textbf{Step 1. Littlewood–Paley decomposition.} Let $\phi$ be a radial, real-valued Schwartz function such that $0 \le D_k(\phi) \le 1$, $\operatorname{supp} D_k(\phi) \subset B(0,2)$, and $D_k(\phi) \equiv 1$ on $B(0,1)$.
Define for all integers $j \ge 0$,
\[
\phi_j(x) = 2^{j(d+2\gamma_k)}\,\phi(2^j x),
\qquad 
\gamma_k = \sum_{\alpha \in R_+} k_\alpha.\]
Then
\begin{align*}
	\int_{\mathbb{R}^d} |\phi_j(x)|\, d\mu_k(x)
	&= \int_{\mathbb{R}^d} \left|2^{j(d+2\gamma_k)}\phi(2^j x)\right|\, h_k^2(x)\,dx  \\
	&= \int_{\mathbb{R}^d} |\phi(x)|\, d\mu_k(x).
\end{align*}

Moreover,
\begin{align*}
	D_k(\phi_j)(\xi)
	&= c_h\int_{\mathbb{R}^d} E(x,-i\xi)\,\phi_j(x)\,h_k^2(x)\,dx \\
	&=c_h \int_{\mathbb{R}^d} E(x,-i\xi)\,2^{j(d+2\gamma_k)}\phi(2^j x)\,h_k^2(x)\,dx \\
	&= c_h\int_{\mathbb{R}^d} E(2^{-j}t,-i\xi)\,\phi( t)\,h_k^2(t)\,dt \\
	&= c_h\int_{\mathbb{R}^d} E(t,-i(2^{-j}\xi))\,\phi(t)\,h_k^2(t)\,dt \\
	&= D_k(\phi)(2^{-j}\xi).
\end{align*}
The forth quality follows from Lemma \ref{EP4}.

Hence,
\[
\operatorname{supp}D_k(\phi_j) \subset B(0,2^{j+1}),
\qquad 
D_k(\phi_j)\equiv 1 \ \text{on } B(0,2^j).
\]

Let
\[
\psi_0=D_k(\phi),
\qquad 
\psi_j=D_k(\phi_j)-D_k(\phi_{j-1}),\quad j\ge1 .
\]
Then
\[
\sum_{j=0}^{\infty}\psi_j=1,
\]
and
\[
\operatorname{supp}(\psi_j)\subset 
B(0,2^{j+1})\setminus B(0,2^{j-1}), \quad j\ge1.
\]

Define
\[
L_N f=\sum_{j=0}^N \psi_j\cdot(\phi_j *_k f),
\qquad 
T_N f=\sum_{j=0}^N \psi_j\cdot(f-\phi_j *_k f).
\]

Set
\[
A_N(x,y)=\sum_{j=0}^N \psi_j(x)\,\tau_x\phi_j(y),
\]
and
\[
M_N(\xi,\eta)=\sum_{j=0}^N \tau_\xi D_k\psi_j(\eta)\,
\bigl(1-D_k\phi_j(\eta)\bigr).
\]

Then
\begin{align*}
	L_N f(x)
	&= \sum_{j=1}^N \psi_j(x)
	\int_{\mathbb{R}^d} f(y)\,\tau_x\phi_j^\vee(y)\,d\mu_k(y) \\
		&= \sum_{j=1}^N \psi_j(x)
	\int_{\mathbb{R}^d} f(y)\,\tau_x\phi_j(y)\,d\mu_k(y) \\
	&= \int_{\mathbb{R}^d} \sum_{j=1}^N 
	\psi_j(x)\,\tau_x\phi_j(y)\,f(y)\,d\mu_k(y) \\
	&= \int_{\mathbb{R}^d} A_N(x,y)\,f(y)\,d\mu_k(y).
\end{align*}
The second equality follows from the radiality of $\phi_j$.
Similarly, we have
\begin{align*}
	D_k(T_N f)(\xi)
	&= \sum_{j=1}^N 
	D_k\psi_j * D_k\bigl( f -\phi_j * f\bigr)(\xi) \\
	&= \sum_{j=1}^N 
	\int_{\mathbb{R}^d} \tau_\xi (D_k\psi_j)^\vee(\eta)
	\bigl(D_k f - D_k\phi_j \cdot D_k f\bigr)(\eta)
	\,d\mu_k(\eta) \\
	&= \int_{\mathbb{R}^d} \sum_{j=0}^N 
	\tau_\xi D_k\psi_j(\eta)\,
	\bigl(1-D_k\phi_j(\eta)\bigr)\,
	D_k f(\eta)\,d\mu_k(\eta) \\
	&= \int_{\mathbb{R}^d} M_N(\xi,\eta)\,D_k f(\eta)\,d\mu_k(\eta).
\end{align*}
\textbf{Step 2. Estimates for the kernels $A_N$ and $M_N$.}
$A_N$ and $M_N$ satisfy the following properties for a suitable constant $C$ independent of $N$:
\begin{align}
	&\text{(i)}\quad \int_{\mathbb{R}^d} |A_N(x,y)|\,d\mu_k(y) \le C \quad \text{for all } x,\\
	&\text{(ii)}\quad \int_{\mathbb{R}^d} |A_N(x,y)|\,d\mu_k(x) \le C \quad \text{for all } y,\\
	&\text{(iii)}\quad \int_{\mathbb{R}^d} |M_N(\xi,\eta)|\,d\mu_k(\eta) \le C \quad \text{for all } \xi,\\
	&\text{(iv)}\quad \int_{\mathbb{R}^d} |M_N(\xi,\eta)|\,d\mu_k(\xi )\le C \quad \text{for all } \eta.
\end{align}
Furthermore, if $S$ and $\Sigma$ are $(\varepsilon ,k)$–thin sets, then
\begin{enumerate}
	\item[(v)]
	\[
	\int_S |A_N(x,y)|\,d\mu_k(y) \le C\varepsilon,
	\qquad \text{for all } x.
	\]
	
	\item[(vi)]
	\[
	\int_\Sigma |M_N(\xi,\eta)|\,d\mu_k(\xi ) \le C\varepsilon,
	\qquad \text{for all } \eta.
	\]
\end{enumerate}
\textit{Proof of (i).}
Recall that
\[
A_N(x,y)=\sum_{j=0}^N \psi_j(x)\,\tau_x \phi_j(y).
\]
Hence,
\begin{align}\label{A1}
	\int_{\mathbb{R}^d} |A_N(x,y)|\, d\mu_k(y)
	&\le \sum_{j=0}^N 
	|\psi_j(x)|
	\int_{\mathbb{R}^d} |\tau_x \phi_j(y)|\, d\mu_k(y).
\end{align}
For each fixed $x\in\mathbb{R}^d$, there are at most three indices
$j$ such that $\psi_j(x)\neq 0$.  By the definition of
$\psi_j$, we have
\[
|\psi_j(x)|\le 1.
\]
Moreover, by Lemma \ref{bound}, we have
\[
\int_{\mathbb{R}^d} |\tau_x\phi_j(y)|\, d\mu_k(y)
\le \|\phi_j\|_{L^1_k}
= \|\phi\|_{L^1_k} .
\]
Combining these estimates, we obtain
\[
\int_{\mathbb{R}^d} |A_N(x,y)|\, d\mu_k(y)
\le 3 \|\phi\|_{L^1_k}\leq C.
\]
\textit{Proof of (ii).}
Using the integral representation of the Dunkl translation and Lemma \ref{EP4},
\begin{align*}
	|\tau_x \phi_j(y)|
	&= \left|
	\int_{\mathbb{R}^d} E(-i x,\xi)E(i y,\xi) D_k(\phi_j)(\xi)  d\mu_k(\xi)
	\right|  \\
	&= \left|
	\int_{\mathbb{R}^d} E(-i x,\xi)E(i y,\xi) D_k(\phi)(2^{-j}\xi)  d\mu_k(\xi)
	\right|  \\
	&=  2^{j(d+2\gamma_k)}\left|
	\int_{\mathbb{R}^d} E(-i x,2^{j}\eta)E(i y,2^{j}\eta) D_k(\phi)(\eta)  d\mu_k(\eta)
	\right|  \\
	&=  2^{j(d+2\gamma_k)}\left|
	\int_{\mathbb{R}^d} E(-i 2^{j}x,\eta)E(i 2^{j} y,\eta) D_k(\phi)(\eta)  d\mu_k(\eta)
	\right|  \\
	&= 2^{j(d+2\gamma_k)}\left|
\tau_{2^{j}x} \phi(2^{j}y)
	\right| . 
\end{align*}
Moreover, by estimate (4.7) in \cite{AJ}, we know that for every $M>0$,
\[
|\tau_x\phi_j(y)|
\le C_M\, 2^{j(d+2\gamma_k)}
\bigl(1+2^j \mathcal{D}(x,y)\bigr)^{-M}
\le C_M\, 2^{j(d+2\gamma_k)}
\bigl(1+2^j d(x,y)\bigr)^{-M},
\]
where $\mathcal{D}(x,y) := \min\{ |x - \sigma(y)| : \sigma \in G \}$ and $
d(x,y):=|x-y|.
$\\
Choose $M=3(d+2\gamma_k)$, we have 
\[
|\tau_x\phi_j(y)|
\le C(d,\gamma_k)\, 2^{j(d+2\gamma_k)}
\bigl(1+2^j d(x,y)\bigr)^{-3(d+2\gamma_k)}.
\]
Fix $y$ and let $\sum^*$ denote the sum over all $j\in\{0,\dots,N\}$ such that $\operatorname{dist}(y,\operatorname{supp}\psi_j)\geq 1$. There are at most three values of $j$ with $\operatorname{dist}(y,\operatorname{supp}\psi_j)<1$. Hence,
\begin{align*}
&\int_{\mathbb{R}^d} |A_N(x,y)|\, d\mu_k(x)
\le 3\int_{\mathbb{R}^d} |\tau_x\phi_j(y)|\, d\mu_k(x)+\int_{\mathbb{R}^d} \sum^*|\psi_j(x)\tau_x\phi_j(y)|\, d\mu_k(x)\\
&\le 3\int_{\mathbb{R}^d} |\tau_{-y}\phi_j(-x)|\, d\mu_k(x)+C(d,\gamma_k)\int_{\mathbb{R}^d} \sum^* |\psi_j(x)|\, 2^{j(d+2\gamma_k)}
\bigl(1+2^j d(x,y)\bigr)^{-3(d+2\gamma_k)}d\mu_k(x)\\
&\leq 3\|\phi\|_{L^1_k}+C(d,\gamma_k)\sum^*2^{-2j(d+2\gamma_k)}\|\psi_j\|_{L^1_k}\\
&\leq 3\|\phi\|_{L^1_k}+C(d,\gamma_k)\sum^*2^{-j(d+2\gamma_k)}\|\psi_0\|_{L^1_k}\\
&\leq C.
\end{align*}
\textit{Proofs of (iii) and (iv).}
We rewrite the definition of $M_N(\xi,\eta)$ as follows:
\begin{align}\label{e1}
M_N(\xi,\eta)
&=\sum_{j=0}^N \tau_\xi D_k\psi_j(\eta)
\bigl(1-D_k\phi_j(\eta)\bigr)\notag \\
&=\sum_{j=0}^N \tau_\xi D_k\psi_j(\eta)
\sum_{i>j}\psi_i(\eta))\notag \\
&=\sum_{i=1}^\infty \psi_i(\eta))\sum_{j=0}^{i\ast}\tau_\xi D_k\psi_j(\eta)
\notag \\
&=\sum_{i=1}^\infty \psi_i(\eta))\tau_\xi\phi_{i\ast}(\eta),
\end{align}
where $i\ast=\min\{i-1,N\}$.
Observe the similarity between (\ref{e1}) and the definition of $A_N(x,y)$.
(iv) can be proved by using the same argument as in the proof of (i). For (iii), we further rewrite (\ref{e1}) as
\begin{align}\label{e2}
	M_N(\xi,\eta)
	&=\sum_{i=1}^N \psi_i(\eta))\tau_\xi\phi_{i-1}(\eta)+\sum_{i>N} \psi_i(\eta))\tau_\xi\phi_{N}(\eta)\notag\\
	&=\sum_{i=1}^N \psi_i(\eta))\tau_\xi\phi_{i-1}(\eta)+\bigl(1-D_k\phi_j(\eta)\bigr)\tau_\xi\phi_{N}(\eta),
\end{align}
Because $|1-D_k\phi_j|\le 1$, we obtain
\[
\int_{\mathbb{R}^d}\lvert(1-D_k\phi_j(\eta))\tau_\xi\phi_{N}(\eta)\rvert d\mu_k(\eta)\leq \int_{\mathbb{R}^d}\lvert\tau_\xi\phi_{N}(\eta)\rvert d\mu_k(\eta)\le \|\phi_N\|_{L^1_k}
= \|\phi\|_{L^1_k}.
\]
On the other hand, the estimate
\[
\int_{\mathbb{R}^d}
\left|
\sum_{i=1}^N
\psi_i(\eta)\tau_\xi\phi_{i-1}(\eta)
\right|
\,d\mu_k(\eta)
\le C
\]
can be obtained by repeating the same argument used in the proof of (ii).
Therefore, property (iii) follows.\\
\textit{Proofs of (v) and (vi).}
We only prove (v), since (vi) follows from the same argument in view of \eqref{e1}.
Fix $x\in\mathbb{R}^d$. Then
\begin{align*}
	\int_S |A_N(x,y)|\,d\mu_k(y)
	&\leq \sum_{j=0}^N |\psi_j(x)|
	\int_S |\tau_x\phi_j(y)|\,d\mu_k(y).
\end{align*}
For each fixed $x\in\mathbb{R}^d$, there are at most three indices
$j$ such that $\psi_j(x)\neq 0$. By the support of \(\psi_j(x)\), we have \(2^{j-1}\leq |x|\leq 2^{j+1}\) Since moreover $|\psi_j(x)|\le 1$, it remains to prove that for any \(2^{j-1}\leq |x|\leq 2^{j+1}\),
\[
\int_S |\tau_x\phi_j(y)|\,d\mu_k(y)
\le C\varepsilon
\]
uniformly in $j$.

Because $\phi$ is a Schwartz function, for every sufficiently large integer $T$ there exists a constant $C_T>0$ such that
\[
|\phi(t)|
\le C_T \sum_{m\ge 0} 2^{-mT}\,\chi_{B(0,2^m)}(t).
\]
Using Lemma \ref{lemma2.1}, we obtain
\begin{align*}
	\int_S |\tau_x\phi_j(y)|\,d\mu_k(y)
	&\le \int_S \tau_x|\phi_j|(y)\,d\mu_k(y) \\
	&\le C_T \sum_{m\ge 0} 2^{-mT}
	\int_S (2^j)^{d+2\gamma_k}\,
	\bigl|\tau_x\chi_{B(0,2^{m-j})}(y)\bigr|\,d\mu_k(y).
\end{align*}

We choose a sequence $\{x_k\}$, and cover the set $\operatorname{supp}\tau_x\chi_{B(0,\,2^{m-j})}  $ by a family of balls $B(x_k,\rho(x_k))$.
We first consider the case \(j\le 3\). Since \((2^j)^{d+2\gamma_k}\) is uniformly bounded in this range,
the proof proceeds exactly as in Case 1 below, and the corresponding factor may be absorbed into the constant. Hence, it remains to treat the case \(j>3\). For \(j>3\), we distinguish two cases according to the relative size of \(m\) and \(j\).\\
\textbf{Case 1: $j \leq m$.}
Then $2^{m-j}\ge 1$ and we proceed as follows.
\begin{align}
	&\int_S (2^j)^{d+2\gamma_k}2^{-mT}
	\bigl|\tau_x\chi_{B(0,\,2^{m-j})}(y)\bigr|\,d\mu_k(y)\notag\\
	&\qquad =
	\sum_{x_k}
	\int_{S\cap B(x_k,\rho(x_k))}
	(2^j)^{d+2\gamma_k}2^{-mT}
	\bigl|\tau_x\chi_{B(0,\,2^{m-j})}(y)\bigr|
	\,d\mu_k(y)\notag\\
	&\qquad \leq
2^{-m(T-d-2\gamma_k)}\sum_{x_k}
\int_{S\cap B(x_k,\rho(x_k))}
\bigl|\tau_x\chi_{B(0,\,2^{m-j})}(y)\bigr|
\,d\mu_k(y).
\end{align}
By Lemma~\ref{LJJ}, we have
\[
\operatorname{supp}\tau_x\chi_{B(0,\,2^{m-j})} 
\subset
\bigcup_{\sigma\in G} B(\sigma(x),2^{m-j}).
\]
Since $|x_k|\le |x|+2^{m-j}$ and $|x|\le 2^{j+1}$, we have
\begin{equation}\label{es2}
	\rho(x_k)
	\ge
	\frac{1}{|x|+2^{m-j}}
	\ge
	\frac{1}{2^{j+1}+2^{m-j}}
	\ge
	\frac{1}{2^{m+1}+2^{m-j}}
	\ge
	2^{-m-2}.
\end{equation}

Consequently, the number of balls of radius $\rho(x_k)$ needed to cover
\(
B(\sigma(x),2^{m-j})
\)
is bounded by
\[
C\frac{(2^{m-j})^d}{\rho(x_k)^d}
\le
C\frac{(2^{m-j})^d}{(2^{-m-2})^d}
\le
C2^{2md+2d}.
\]
Moreover, by the definition of $\rho(x_k)$, we have
\(\rho(x_k)\leq 1.\)
Using Theorem 5.1 in \cite{MR3}, the estimate on the number of relevant balls, and the $(\varepsilon,k)$-thinness of $S$, we deduce
\begin{align}
	&\int_S (2^j)^{d+2\gamma_k}2^{-mT}
	\bigl|\tau_x\chi_{B(0,\,2^{m-j})}(y)\bigr|
	\,d\mu_k(y)\notag\\
	&\qquad \leq
	2^{-m(T-d-2\gamma_k)}\sum_{x_k}
	\int_{S\cap B(x_k,\rho(x_k))}
	\bigl|\tau_x\chi_{B(0,\,2^{m-j})}(y)\bigr|
	\,d\mu_k(y)\notag\\
	&\qquad \leq
	2^{-m(T-d-2\gamma_k)}
	\sum_{x_k}
	\mu_k(S\cap B(x_k,\rho(x_k)))\notag\\
	&\qquad \leq
	C2^{-m(T-d-2\gamma_k)}2^{2md}
	\sup_{x_k}
	\mu_k(S\cap B(x_k,\rho(x_k)))\notag\\
	&\qquad \leq
	C\varepsilon
	2^{-m(T-d-2\gamma_k)}2^{2md}
	\sup_{x_k}
	\mu_k(B(x_k,\rho(x_k))).
\end{align}
Next, using the estimate
\[
\mu_k(B(x_k,\rho(x_k)))
\leq
C\,\rho(x_k)^d
\prod_{\alpha\in R_+}
\bigl(|\langle x_k,\alpha\rangle|+\rho(x_k)\bigr)^{2k_\alpha},
\]
which follows from \eqref{vs}, 
together with \eqref{es2} and the fact that $\rho(x_k)\le1$, we obtain
\begin{align}\label{ee1}
	&\int_S (2^j)^{d+2\gamma_k}2^{-mT}
	\bigl|\tau_x\chi_{B(0,\,2^{m-j})}(y)\bigr|
	\,d\mu_k(y)\notag\\
	&\qquad \leq
	C\varepsilon 2^{-m(T-d-2\gamma_k)}2^{2md}\sup_{x_k} \rho(x_k)^d
	\displaystyle\prod_{\alpha\in R_+}
	\bigl(|\langle x_k,\alpha\rangle|+\rho(x_k)\bigr)^{2k_\alpha}\notag\\
	&\qquad \leq
	C\varepsilon
	2^{-m(T-d-2\gamma_k)}2^{2md}
	\sup_{x_k}
	\prod_{\alpha\in R_+}
	\bigl(|x_k||\alpha|+1\bigr)^{2k_\alpha}\notag\\
	&\qquad \leq
	C\varepsilon
	2^{-m(T-d-2\gamma_k)}
	2^{2md}
	(2^{m+3})^{2\gamma_k}\notag\\
	&\qquad \leq
	C\varepsilon
	2^{-m(T-3d-4\gamma_k)}.
\end{align}
\textbf{Case 2: $m<j$.}
By Lemma \ref{lemma1} and Lemma \ref{cutoff}, we have
\begin{equation}\label{kernel-est}
	\bigl|(2^j)^{d+2\gamma_k}\tau_x\chi_{B(0,\,2^{m-j})}(y)\bigr|=	\bigl|(2^j)^{d+2\gamma_k}\tau_y\chi_{B(0,\,2^{m-j})}(x)\bigr|
	\leq C2^{m(2d+2\gamma_k)}\,\mu_k\bigl(B(y,2^{-j})\bigr)^{-1}.
\end{equation}
Hence,
\begin{align}
	&\int_S (2^j)^{d+2\gamma_k}2^{-mT}
	\bigl|\tau_x\chi_{B(0,\,2^{m-j})}(y)\bigr|\,d\mu_k(y)\notag\\
	&\qquad =
	\sum_{x_k}
	\int_{S\cap B(x_k,\rho(x_k))}
	(2^j)^{d+2\gamma_k}2^{-mT}
	\bigl|\tau_x\chi_{B(0,\,2^{m-j})}(y)\bigr|
	\,d\mu_k(y)\notag\\
	&\qquad \leq
	C\sum_{x_k}
	\int_{S\cap B(x_k,\rho(x_k))}
	2^{-mT}2^{m(2d+2\gamma_k)}
	\sup_{y_0\in S\cap B(x_k,\rho(x_k))}
	\mu_k\bigl(B(y_0,2^{-j})\bigr)^{-1}
	\,d\mu_k(y)\notag\\
	&\qquad \leq
	C\sum_{x_k}
	\sup_{y_0\in S\cap B(x_k,\rho(x_k))}
	2^{-m(T-2d-2\gamma_k)}
	\frac{\mu_k(S\cap B(x_k,\rho(x_k)))}
	{\mu_k(B(y_0,2^{-j}))}\notag\\
	&\qquad \leq
	C\varepsilon
	\sum_{x_k}
	\sup_{y_0\in B(x_k,\rho(x_k))}
2^{-m(T-2d-2\gamma_k)}
	\frac{\mu_k(B(x_k,\rho(x_k)))}
	{\mu_k(B(y_0,2^{-j}))}.
	\label{thin-est}
\end{align}
The last inequality follows from the $(\varepsilon,k)$-thinness of $S$.
Using \eqref{vs}, we obtain
\begin{align}
	\frac{\mu_k(B(x_k,\rho(x_k)))}
	{\mu_k(B(y_0,2^{-j}))}
	&\leq
	C\,
	\frac{
		\rho(x_k)^d
		\displaystyle\prod_{\alpha\in R_+}
		\bigl(|\langle x_k,\alpha\rangle|+\rho(x_k)\bigr)^{2k_\alpha}
	}
	{
		(2^{-j})^d
		\displaystyle\prod_{\alpha\in R_+}
		\bigl(|\langle y_0,\alpha\rangle|+2^{-j}\bigr)^{2k_\alpha}
	}.
	\label{measure-ratio}
\end{align}

Moreover, by Lemma~\ref{LJJ},
\[
\operatorname{supp}\tau_x\chi_{B(0,\,2^{m-j})}  
\subset
\bigcup_{\sigma\in G} B(\sigma(x),2^{m-j}),
\]
and therefore the number of relevant balls is bounded by
\[
C\frac{(2^{m-j})^d}{\rho(x_k)^d}.
\]
Since, by the definition of a thin set, 
\begin{equation}\label{es1}
\rho(x_k)\leq\frac{1}{|x|-2^{m-j}}\leq \frac{1}{|x|-1}\leq \frac{1}{2^{j-1}-1}\leq 4 \cdot 2^{-j},
\end{equation}
we deduce from \eqref{thin-est} and \eqref{measure-ratio} that
\begin{align}
	&\int_S (2^j)^{d+2\gamma_k}2^{-mT}
	\bigl|\tau_x\chi_{B(0,\,2^{m-j})}(y)\bigr|
	\,d\mu_k(y)\notag\\
	&\qquad \leq
	C\varepsilon\,2^{md} 2^{-m(T-2d-2\gamma_k)}
	\sup_{y_0\in B(x_k,\rho(x_k))}
	\prod_{\alpha\in R_+}
	\left(
	\frac{
		4+|\langle 2^j x_k,\alpha\rangle|
	}{
		1+|\langle 2^j y_0,\alpha\rangle|
	}
	\right)^{2k_\alpha}.
	\label{product-est}
\end{align}

Now write
\[
2^j y_0 = 2^j x_k +2^j \rho(x_k) x_0,
\qquad x_0\in B(0,1),
\]
and define
\[
I_\alpha
:=
\frac{
	4+|\langle 2^j x_k,\alpha\rangle|
}{
	1+|\langle 2^j x_k,\alpha\rangle+\langle 2^j \rho(x_k)x_0,\alpha\rangle|
}.
\]
Then
\[
\frac{
	4+|\langle 2^j x_k,\alpha\rangle|
}{
	1+|\langle 2^j y_0,\alpha\rangle|
}
= I_\alpha.
\]

If
\[
|\langle 2^j x_k,\alpha\rangle|\leq 10,
\]
then
\[
I_\alpha
\leq
\frac{4+10}{1}
=14.
\]
On the other hand, if
\[
|\langle 2^j x_k,\alpha\rangle|>10,
\]
then, since $x_0\in B(0,1)$, $|\alpha|=\sqrt{2}$, and by \eqref{es1}, we obtain
\begin{align*}
	I_\alpha
	&\leq
	\frac{
		4+|\langle 2^j x_k,\alpha\rangle|
	}{
		1+|\langle 2^j x_k,\alpha\rangle|
		-
		|\langle 2^j \rho(x_k)x_0,\alpha\rangle|
	}\\
	&\leq
	1+
	\frac{
		3+\langle 2^j \rho(x_k)x_0,\alpha\rangle
	}{
		1+|\langle 2^j x_k,\alpha\rangle|-8
	}\\
	&\leq
	1+
	\frac{
		3+8
	}{
		1+|\langle 2^j x_k,\alpha\rangle|-8
	}
	\leq 5.
\end{align*}
Thus,
\[
I_\alpha\leq C,
\qquad \forall \alpha\in R_+,
\]
which implies that
\[
\prod_{\alpha\in R_+} I_\alpha^{2k_\alpha}\leq C.
\]
Substituting this estimate into \eqref{product-est}, we conclude that
\begin{equation}\label{ee2}
\int_S (2^j)^{d+2\gamma_k}2^{-mT}
\bigl|\tau_x\chi_{B(0,\,2^{m-j})}(y)\bigr|
\,d\mu_k(y)
\leq
C\varepsilon 2^{-m(T-3d-2\gamma_k)}.
\end{equation}
Consequently, for \(j>3\), it follows from \eqref{ee1} and \eqref{ee2} that
\[
\int_S |\tau_x\phi_j(y)|\,d\mu_k(y)
\le C\varepsilon \sum_{m<j} 2^{-m(T-3d-2\gamma_k)}
+ C\varepsilon \sum_{m\ge j} 2^{-m(T-3d-4\gamma_k)}
\le C\varepsilon,
\]
provided that \(T>3d+4\gamma_k\).

Hence,
\[
\int_S |A_N(x,y)|\,d\mu_k(y)\le C\varepsilon.
\]
This completes the proof of (v).\\
\textbf{Step 3. Final estimates and the strong annihilating inequality.}
Define
\[
L f=\lim_{N\to\infty}L_N f,
\qquad
T f=\lim_{N\to\infty}T_N f.
\]
If $f\in\mathcal{S}$, then $L_N f$ and $T_N f$ converge in the topology of $\mathcal{S}$.
By construction,
\[
L+T=I,
\]
where $I$ denotes the identity operator.
Moreover, both $L$ and $T$ extend to bounded operators on $L_k^2(\mathbb{R}^d)$.

Using (ii) and (v) (respectively, (iii) and (iv)) together with Schur's test, we obtain
\begin{equation}\label{equa1}
	\|L(\chi_S f)\|_{L_k^2}
	\le C\,\varepsilon^{1/2}\|f\|_{L_k^2},
\end{equation}
and
\begin{equation}\label{equa2}
	\|\chi_\Sigma D_k(T(\chi_S f))\|_{L_k^2}
	\le C\,\varepsilon^{1/2}\|f\|_{L_k^2}.
\end{equation}

Now let
\[
Hf:=\chi_\Sigma D_k(\chi_S f).
\]
Since for $f\in L_k^2(\mathbb{R}^d)$,
\[
\chi_\Sigma D_k(\chi_S f)
=
\chi_\Sigma D_k\bigl(L(\chi_S f)\bigr)
+
\chi_\Sigma D_k\bigl(T(\chi_S f)\bigr),
\]
it follows from \eqref{equa1} and \eqref{equa2} that
\begin{align*}
	\|Hf\|_{L_k^2}
	&\leq
	\|\chi_\Sigma D_k(L(\chi_S f))\|_{L_k^2}
	+
	\|\chi_\Sigma D_k(T(\chi_S f))\|_{L_k^2}\\
	&\leq
	\|L(\chi_S f)\|_{L_k^2}
	+
	C\varepsilon^{1/2}\|\chi_S f\|_{L_k^2}\\
	&\leq
	C\varepsilon^{1/2}\|f\|_{L_k^2}.
\end{align*}
Hence,
\[
\|H\|\leq C\varepsilon^{1/2}.
\]

Taking $\varepsilon>0$ sufficiently small, we obtain
\(\|H\|<1.\)
By Lemma \ref{equ}, we have
	\[
\|f\|_{L^2_k}
\le C
\Big(
\|\chi_{S^c} f\|_{L^2_k}
+
\|\chi_{\Sigma^c} D_k f\|_{L^2_k}
\Big).
\]
Therefore, \((S,\Sigma)\) is a strong annihilating pair for the Dunkl transform.
\end{proof}
\section{Proof of Theorem \ref{T1}}\label{sec4}
In this section, we will prove Theorem \ref{T1}. We begin by establishing, in Lemma \ref{L5}, the equivalence between the uncertainty principle and Unique continuation inequality at two distinct points in time. We then complete the proof of Theorem \ref{T1}.
\begin{lemma}\label{L5}
	Let $A$ and $B$ be measurable subsets of $\mathbb{R}^{d}$. Then the following statements are equivalent:\\
	(i) There exists a positive constant $C_1(k,d,A,B)$ such that for each $f\in L^{2}_k(\mathbb{R}^{d})$,
	\begin{equation}\label{L5.1}
		\int_{\mathbb{R}^{d}}|f(x)|^{2}d\mu_k(x)\leq C_1(k,d,A,B)\left( \int_A |f(x)|^{2}d\mu_k(x)+\int_B |D_k(f)(x)|^{2}d\mu_k(x)\right) .
	\end{equation}  
	(ii) There exists a positive constant $C_2(k,d,A,B)$ such that for each $T>0$ and each $u_0\in L^{2}_k(\mathbb{R}^{d})$,
	\begin{equation}\label{L5.2}
		\int_{\mathbb{R}^{d}}|u_0(x)|^{2}d\mu_k(x)\leq C_2(k,d,A,B)\left( \int_A |u_0 (x)|^{2}d\mu_k(x)+\int_{2TB} |u(x,T;u_0)|^{2}d\mu_k(x)\right) .
	\end{equation}
	Furthermore, when one of the above two statements holds, the constants $C_1(k,d,A,B)$ and $C_2(k,d,A,B)$ can be chosen to be the same number.
\end{lemma}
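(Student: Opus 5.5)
The plan is to derive both implications directly from the representation formula \eqref{key}. Fix $T>0$ and $u_0\in L^2_k(\mathbb{R}^d)$, and set
\[
f(x):=e^{i|x|^2/4T}u_0(x).
\]
Then $|f|=|u_0|$ pointwise, so $\|f\|_{L^2_k}=\|u_0\|_{L^2_k}$ and $\int_A|f|^2\,d\mu_k=\int_A|u_0|^2\,d\mu_k$. By \eqref{key}, with $\operatorname{sgn}T=1$, we have
\[
|u(x,T;u_0)|=(2T)^{-(\gamma_k+\frac d2)}\bigl|D_k(f)\bigl(\tfrac{x}{2T}\bigr)\bigr|.
\]
The unimodular factors $e^{-i(d+2\gamma_k)\pi/4}$ and $e^{i|x|^2/4T}$ disappear once we take absolute values.

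The key computation is the change of variables $x=2Ty$ in $\int_{2TB}|u(x,T;u_0)|^2\,d\mu_k(x)$. Since $h_k$ is homogeneous of degree $\gamma_k$, we have $d\mu_k(2Ty)=(2T)^{d+2\gamma_k}\,d\mu_k(y)$, and this factor exactly cancels $(2T)^{-(2\gamma_k+d)}$. The result is
\[
\int_{2TB}|u(x,T;u_0)|^2\,d\mu_k(x)=\int_B|D_k(f)(y)|^2\,d\mu_k(y).
\]
Hence the right-hand sides of \eqref{L5.1} for $f$ and of \eqref{L5.2} for $u_0$ coincide term by term, and so do the left-hand sides.

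For (i)$\Rightarrow$(ii), I apply \eqref{L5.1} to $f=e^{i|\cdot|^2/4T}u_0$, which lies in $L^2_k$. For (ii)$\Rightarrow$(i), given any $f\in L^2_k$, I take $T=1$ (any fixed $T$ works) and set $u_0:=e^{-i|x|^2/4}f$, which again lies in $L^2_k$. Applying \eqref{L5.2} to this $u_0$ and reading the identities backwards gives \eqref{L5.1}. Because all the identities are exact equalities, the same constant works in both directions, which gives the final assertion $C_1=C_2$.

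The only points needing care are the following. First, formula \eqref{key} must be valid for general $u_0\in L^2_k$ with $D_k$ understood as the $L^2$ extension; I would take this for granted, as stated in \cite{HM}, or argue by density and Plancherel. Second, the homogeneity computation for $\mu_k$ must be checked. Neither is a real obstacle; the main thing to verify is that the exponent $\gamma_k+\frac d2$ squared matches the scaling $d+2\gamma_k$ of $\mu_k$, which it does.
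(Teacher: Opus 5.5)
Your proposal is correct and follows essentially the same route as the paper. The paper simply defers to the proof of Lemma 2.3 in \cite{WWZ}, with the free Schr\"odinger formula replaced by \eqref{key}; that argument is exactly your modulation $f=e^{i|\cdot|^2/4T}u_0$, followed by the change of variables $x=2Ty$, where the homogeneity $d\mu_k(2Ty)=(2T)^{d+2\gamma_k}\,d\mu_k(y)$ cancels the prefactor $(2T)^{-(d+2\gamma_k)}$, so that every step is an exact identity and $C_1=C_2$ follows as you say.
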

\begin{proof}
	The proof of this lemma is the same as the proof of Lemma 2.3 in \cite{WWZ}. We just mention that one  needs to replace formula (2.6) in \cite{WWZ} with formula (\ref{key}) in this paper.
\end{proof}
We are now in a position to prove Theorem \ref{T1}.
\begin{proof}
	[\textbf{Proof of Theorem \ref{T1}.}]  Let $T>S\geq 0$ and let $A,B\subset \mathbb{R}^d$ be $(\varepsilon ,k)$–thin sets. By Theorem \ref{pair}, We have (\ref{L5.1}) with $(A,B)$ replaced by $\left( A^{c},B^{c} \right) $ and $C_1(k,d,A,B)$ replaced by $C\left( k,d,A,B \right) $, where  $C\left( k,d,A,B\right) $ is given in (\ref{pair1}). Thus, we can apply Lemma \ref{L5} to get (\ref{L5.2}) with $(A,B)$ replaced by $\left( A^{c},B^{c} \right) $ and $C_2(k,d,A,B)$ replaced by $C\left( k,d,A,B\right) $. So we have
	\begin{equation}\label{T1.2}
		\int_{\mathbb{R}^{d}}|u_0(x)|^{2}d\mu_k(x)\leq C\left( k,d,A,B \right) \left( \int_{A^{c}}|u_0(x)|^{2}d\mu_k(x)+\int_{(2(T-S)B)^{c}}|u(x,T-S;u_0)|^{2}dx\mu_k(x)\right) .
	\end{equation}
	
	Finally, by (\ref{T1.2}), we get
	\begin{equation*}
		\int_{\mathbb{R}^{d}}|u(x,S;u_0)|^{2}d\mu_k(x)\leq C\left( k,d,A,B\right) \left( \int_{A^{c}}|u(x,S;u_0)|^{2}d\mu_k(x)+\int_{(2(T-S)B)^{c}}|u(x,T;u_0)|^{2}d\mu_k(x)\right) .
	\end{equation*}
	By the conservation law for the Schr\"{o}dinger equation, we get the inequality (\ref{T1.1}) in Theorem \ref{T1}.
\end{proof}
\section*{Acknowledgments}
Zhiwen Duan was supported by the National Natural Science Foundation of China under grant 12171178. The authors thank the anonymous referees and the associate editor for their invaluable comments, which helped to improve the paper.
\section*{Data Availability}
Data sharing is not applicable to this article as no new data were created or analyzed in this study.
 
\end{document}